\numberwithin{equation}{section} 
\newcommand{\NN}{\mathbb N}
\newcommand{\RR}{\mathbb R}
\newcommand{\dd}{\mathop{}\!\mathrm{d}} 
\DeclareMathOperator{\id}{id}
\newcommand{\set}[2][]{  
	\ensuremath{\left\{\hspace{0.1em}  %
	\if\relax\detokenize{#1}\relax
		{#2} 
		\else
		{#1}\,\middle\vert\,{#2}
	\fi
	\hspace{0.1em} \right\}}
}
\newcommand{\quotient}[2]{\left.\raisebox{.2em}{$#1$}\middle/\raisebox{-.2em}{$#2$}\right.}
\newtheorem{theorem}{Theorem}  
\newtheorem{corollary}[theorem]{Corollary}  
		\newtheorem{thm}{Theorem}[section]
		\newtheorem{lem}[thm]{Lemma}
		\newtheorem{cor}[thm]{Corollary}
		\newtheorem{prop}[thm]{Proposition}
	\theoremstyle{definition}	
		\newtheorem{defn}[thm]{Definition}
		\newtheorem{example}[thm]{Example}
\author[M.~G\"{u}nther]{Martin~G\"{u}nther$^\ast$}
\address[M.~G\"{U}nther]{Institute of Algebra and Geometry, Karlsruhe Institute of Technology (KIT), Karlsruhe, Germany.}
\email{\href{mailto:martin.guenther@kit.edu}{martin.guenther@kit.edu}}
\begin{document}


\title[categorical structure of timelike homotopy classes]{On the categorical and topological structure of timelike and causal homotopy classes of paths in smooth spacetimes}
\date{July 29, 2020}


\subjclass[2020]{53C50}
\keywords{Lorentzian manifold, causal structure, timelike homotopy, causal homotopy, low regularity}

	\begin{abstract}
	
	For a smooth spacetime $X$, based on the timelike homotopy classes of its timelike paths, we define a topology on $X$ that refines the Alexandrov topology and always coincides with the manifold topology.
	
	The space of timelike or causal homotopy classes forms a semicategory or a category, respectively. We show that either of these algebraic structures encodes enough information to reconstruct the topology and conformal structure of $X$.
	Furthermore, the space of timelike homotopy classes carries a natural topology that we prove to be locally euclidean but, in general, not Hausdorff.
	
	The presented results do not require any causality conditions on $X$ and do also hold under weaker regularity assumptions.

	\end{abstract}
	
\maketitle	

\section{Main results}

For simplicity, $(X,g)$ will be a smooth spacetime throughout this paper, i.e.\ a smooth time-oriented Lorentzian manifold.
Nevertheless, all of the following results are true for a broader class of manifolds, namely $\mathcal C^1$-spacetimes that satisfy lemma~\ref{lem:simple nbhds}, which is a local condition on the topology and chronological structure.

Let $P(X)$ be the space of paths in $X$, i.e.\ continuous maps $[0,1]\to X$, and $P^\mathrm{t}(X) \subseteq P^\mathrm{c}(X) \subseteq P(X)$ the spaces of continuous future-directed timelike or causal paths in $X$.
A juxtaposed pair of points $x,y\in X$ shall restrict these sets to only paths from $x$ to $y$.
We call two paths $c_0, c_1 \in P^{\mathrm{t/c}}(X)(x,y)$
\emph{causally/timelike homotopic relative to their endpoints}, or, for simplicity, just \emph{homotopic}, if there is a continuous map 
\[
H\colon[0,1]^2 \to X, \quad (s,t)\mapsto c_s(t) \quad
\]
with $c_s\in P^{\mathrm{t/c}}(X)(x,y)$ for all $s\in [0,1]$. 

It is well known (see e.g.\ \cite[Proposition 3.11.]{GlobalLorentzianGeometry}) that the \emph{chronological diamonds}
\begin{align*}
I_X(x,y) \coloneqq{}& \set[z\in X]{x\ll z \ll y} \\
={}& \set[z\in X]{\exists a\in P^\mathrm{t}(X)(x,z), b\in P^\mathrm{t}(X)(z,y)}
\end{align*}
for all $x,y\in X$ constitute a basis of the \emph{Alexandrov topology} on $X$, which is strictly coarser than the manifold topology on $X$ unless $X$ is strongly causal.

Let $ba$ be the chronological path obtained by concatenating%
\footnote{Note that the order $ba$ (rather than $ab$) is analogous to the composition of functions, where $g\circ f$ means \enquote{first $f$ then $g$}. This is also the convention used for composition of morphisms in a (semi-)category.}
$a$ and $b$
and reparametrizing linearly to the domain $[0,1]$.
By restricting $ba$ to a single homotopy class, we get a refinement of the above statement:

\begin{defn} \label{def:diamonds}
Let $X$ be a spacetime. For a path $c\in P^\mathrm{t}(X)(x,y)$, let
\begin{align*}
I_X([c]) &\coloneqq \set[z\in X]{\exists a\in P^\mathrm{t}(X)(x,z), b\in P^\mathrm{t}(X)(z,y) : ba\sim_\mathrm{t} c}, \\
\shortintertext{for a path $c\in P^\mathrm{c}(X)(x,y)$, let} 
J_X([c]) &\coloneqq \set[z\in X]{\exists a\in P^\mathrm{c}(X)(x,z), b\in P^\mathrm{c}(X)(z,y) : ba\sim_\mathrm{c} c},
\end{align*}
where $\sim_\mathrm{t}$ and $\sim_\mathrm{c}$ denote timelike and causal homotopy relative to the endpoints.
\end{defn}

\begin{theorem}\label{thm:Alexandrov-like}
In a spacetime $X$, the sets $I_X([c])$ for all $c\in P^\mathrm{t}(X)$ form a basis of the manifold topology of $X$.

There is a neighborhood basis (in the manifold topology) of every point in $X$ consisting of sets $J_X([c])$ for suitable $c\in P^\mathrm{c}(X)$. 
\end{theorem}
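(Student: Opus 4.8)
The plan is to verify the two defining properties of a basis of the manifold topology: that every set $I_X([c])$ is open, and that these sets refine the manifold topology, i.e.\ for each $p\in X$ and each manifold-open $U\ni p$ there is a timelike path $c$ with $p\in I_X([c])\subseteq U$. Both steps are local, so throughout I would work inside a simple neighborhood supplied by lemma~\ref{lem:simple nbhds}, whose feature I use repeatedly is that any two future-directed timelike paths in such a neighborhood with the same endpoints are timelike homotopic \emph{within} it.

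For openness, fix $z\in I_X([c])$ together with timelike paths $a\colon x\to z$ and $b\colon z\to y$ satisfying $ba\sim_\mathrm{t}c$, and choose a simple neighborhood $W$ of $z$. For any $z'$ close enough to $z$ one has $z'$ in the chronological diamond of two interior points of $a$ and $b$ lying in $W$, so the final segment of $a$ and the initial segment of $b$ may be rerouted through $z'$ without leaving $W$, yielding timelike paths $a'\colon x\to z'$ and $b'\colon z'\to y$. Since the modified segments agree with the old ones outside $W$ and share their endpoints inside the simple neighborhood $W$, homotopy-uniqueness in $W$ gives $b'a'\sim_\mathrm{t}ba\sim_\mathrm{t}c$, hence $z'\in I_X([c])$; thus $I_X([c])$ is a manifold-neighborhood of each of its points.

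For the refinement, given $p$ and manifold-open $U\ni p$ I would pick a simple neighborhood $V\subseteq U$ of $p$ and points $x,y\in V$ with $x\ll p\ll y$ realized by a timelike path $c$ through $p$ lying entirely in $V$; splitting $c=ba$ at $p$ shows immediately that $p\in I_X([c])$. The remaining task is the inclusion $I_X([c])\subseteq V$, which I would obtain by proving $I_X([c])=I_V(x,y)$, the chronological diamond computed using only paths inside $V$. The inclusion $I_V(x,y)\subseteq I_X([c])$ is immediate from homotopy-uniqueness in $V$, so everything rests on the reverse \emph{confinement} statement: any $z$ admitting timelike $a',b'$ with $b'a'\sim_\mathrm{t}c$ must already lie in $V$. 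This is the main obstacle, and it is exactly where the absence of causality conditions bites, since in a spacetime that is not strongly causal both the full diamond $I_X(x,y)$ and an interpolating timelike homotopy $H$ from $c$ to $b'a'$ may wander arbitrarily far from $V$, so causal convexity cannot be invoked. The plan is an open--closed argument on the homotopy parameter $s$: the set of $s$ for which $H(s,\cdot)$ stays in $V$ is open and contains $s=0$, and the point is its closedness, which is delicate precisely because $V$ is open. I would handle this by taking $V$ with compact closure inside a slightly larger simple neighborhood and using the local diamond structure from lemma~\ref{lem:simple nbhds} to rule out a timelike path with endpoints in $V$ that first touches $\partial V$ while remaining timelike-homotopic to a path interior to $V$; together with the easy containments this gives $I_X([c])=I_V(x,y)\subseteq V\subseteq U$ and completes the first assertion.

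For the causal statement I would argue analogously, replacing timelike paths, $\ll$ and $\sim_\mathrm{t}$ by causal paths, $\le$ and $\sim_\mathrm{c}$, and $I$ by $J$: the rerouting, homotopy-uniqueness, and confinement steps all transfer inside a simple neighborhood, yielding $J_X([c])\subseteq V$. The one structural difference is that the causal diamonds $J_V(x,y)$ are in general neither open nor closed, as they contain null boundary segments, so one does not get a basis of open sets; what the construction delivers is a family of sets $J_X([c])$ each containing a manifold-neighborhood of $p$, which is exactly the claimed neighborhood basis.
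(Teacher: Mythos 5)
Your overall strategy is the paper's: work inside the simple neighborhoods of lemma~\ref{lem:simple nbhds}, identify $I_X([c])$ with a local chronological diamond via a confinement argument that is open--closed in the homotopy parameter, and then read off both openness and refinement. Your first paragraph (openness of every $I_X([c])$ by rerouting through a nearby point inside a simple neighborhood) is correct and is in fact slightly more explicit than what the paper writes down. The problem is in the confinement step, which you correctly single out as the crux but do not actually close.

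The gap is in the closedness of $S_1=\set[s]{c_s([0,1])\subseteq V}$. You propose to take $V$ with compact closure in a larger simple neighborhood $U'$ and to ``rule out a timelike path with endpoints in $V$ that first touches $\partial V$.'' But such a limit path $c_{s_0}$ only has image in $\overline V\subseteq U'$; it is therefore a timelike path in $U'$ from $x$ to $y$, and its interior points lie in $I_{U'}(x,y)$ --- a set that nothing in your construction prevents from meeting $\partial V$. Choosing $x\ll p\ll y$ merely ``realized by a timelike path in $V$'' does not control the size of $I_{U'}(x,y)$. The paper closes exactly this hole in two moves you are missing: (1) the closed set in the open--closed argument is not $\overline V$ but $\overline{I_U(x,y)}$, which automatically contains the image of \emph{every} timelike path in $U$ from $x$ to $y$, so the argument reduces to the single hypothesis $\overline{I_U(x,y)}\subseteq U$ (lemma~\ref{lem:diamonds in scnn}); and (2) that hypothesis is arranged by using the \emph{strong causality} of the simple neighborhood --- the second clause of lemma~\ref{lem:simple nbhds}, which you never invoke --- to guarantee that the diamonds $I_U(x,y)$ shrink to $p$ as $x,y\to p$ and hence can be trapped inside a precompact $V$ with $\overline V\subseteq U$ (lemma~\ref{lem:Neighborhood basis}). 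Without strong causality of $U$ the local diamonds need not shrink at all, and your choice of $x,y$ gives no confinement. The causal half has the same gap, plus one detail you should make explicit: the base path $c$ must still be chosen timelike (the $J$-sets along a lightlike geodesic are not neighborhoods), and the paper obtains $\overline{J_U(x',y')}\subseteq U$ by sandwiching the causal diamond between two timelike ones rather than by a separate causal shrinking argument.
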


Note that no causality conditions are assumed in this theorem. We could even generalize the setting to non-time-orientable Lorentzian manifolds, by observing that theorem~\ref{thm:Alexandrov-like} provides a basis of the time-oriented double cover, which projects down to a basis of the manifold topology of $X$. 

The notation indicates that $I_X([c])$ or $J_X([c])$ only depends on the timelike or causal homotopy class $[c]$ of the path $c$, respectively. 
As with the usual chronological and causal diamonds, the sets $I_X([c])$ are open but the sets $J_X([c])$ are, in general, neither open nor closed. The latter may even have empty interior, if $c$ is a lightlike geodesic.

We now consider the quotient spaces $\Pi^{\mathrm{t}}(X)$ and $\Pi^{\mathrm{c}}(X)$, defined by
\begin{align}
\Pi^{\mathrm{t/c}}(X)(x,y) & \coloneqq \quotient{P^\mathrm{t/c}(X)(x,y)}{\sim_\mathrm{t/c}},
 \label{eqn:morphism_sets_1}\\
\Pi^{\mathrm{t/c}}(X)\phantom{(x,y)} & \coloneqq \quotient{P^\mathrm{t/c}(X)}{\sim_\mathrm{t/c}}.
\label{eqn:morphism_sets_2}
\end{align}

Since a strictly monotonic reparametrisation of a path does not change its homotopy class, the operation $[b][a]\coloneqq [ba]$ is well defined and associative. 
With this operation, $\Pi^{\mathrm{t/c}}(X)$ becomes a semicategory with object set $X$ and morphism sets $\Pi^{\mathrm{t/c}}(X)(x,y)$. 
Doing the same construction without restriction to causal or timelike paths yields the well-known fundamental groupoid $\Pi(X)$, see \cite{TopologyAndGroupoids}.

As concatenating any path with a constant path doesn't change the causal homotopy class, $\Pi^\mathrm{c}(X)$ is actually a category with identity morphisms.
On the other hand, corollary~\ref{cor:no inverses} will show that $\Pi^\mathrm{t}(X)$ is a genuine semicategory without any identity morphisms, essentially because constant paths are not timelike.

The following theorem and its corollary show that the isomorphism type of the category $\Pi^{\mathrm{c}}(X)$ or the semicategory $\Pi^{\mathrm{t}}(X)$ fully encodes the topology and the conformal structure of $X$:

\begin{theorem} \label{thm:conformal structure}
Let $(X,g)$, $(Y,h)$ be spacetimes. Then, any continuous map $f\colon X \to Y$ that maps future directed causal paths in $X$ to such in $Y$ induces a functor $f_\ast\colon \Pi^{\mathrm{c}}(X) \to \Pi^{\mathrm{c}}(Y)$.
Furthermore, for a functor $F\colon \Pi^{\mathrm{c}}(X) \to \Pi^{\mathrm{c}}(Y)$, the following statements are equivalent:
\begin{enumerate}[i)]
\item $F$ is an isomorphism, i.e.\ it has an inverse functor $F^{-1}\colon \Pi^{\mathrm{c}}(Y) \!\to\! \Pi^{\mathrm{c}}(X)$.
\item $F$ is induced by a homeomorphism $f\colon X\to Y$ such that both $f$ and $f^{-1}$ map future directed causal paths to future directed causal paths.
\end{enumerate}

The same is true if \enquote{causal} is replaced by \enquote{timelike} and $\Pi^c(X)$ is replaced by $\Pi^t(X)$.
\end{theorem}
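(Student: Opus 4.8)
The plan is to dispatch the two easy implications first and then concentrate on the reconstruction (i)$\Rightarrow$(ii). For the first assertion, if $f$ is continuous and sends future directed causal paths to future directed causal paths, then post-composition with $f$ sends a causal homotopy $(c_s)_{s\in[0,1]}$ to the causal homotopy $(f\circ c_s)_s$, so $[c]\mapsto[f\circ c]$ is well defined on $\sim_{\mathrm c}$-classes; it respects concatenation (up to reparametrisation, $f\circ(ba)=(f\circ b)(f\circ a)$) and sends constant paths to constant paths, hence is a functor $f_\ast$ with object map $f$. For (ii)$\Rightarrow$(i), if in addition $f$ is a homeomorphism whose inverse also preserves future directed causal paths, then $(f^{-1})_\ast$ is a functor, and $(f^{-1})_\ast f_\ast=(\id)_\ast=\id$ together with the symmetric identity shows that $F=f_\ast$ is an isomorphism.

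For (i)$\Rightarrow$(ii) I would first read off the object map: an isomorphism of (semi-)categories is in particular a bijection on objects, giving a bijection $f\colon X\to Y$, and the whole point is to show that this $f$ is the required homeomorphism and that $F=f_\ast$. The decisive observation is that the diamonds of Definition~\ref{def:diamonds} are \emph{categorical}: for a morphism $[c]\colon x\to y$ one has $z\in J_X([c])$ if and only if $[c]$ factors through $z$, i.e.\ $[c]=[b][a]$ with $[a]\colon x\to z$ and $[b]\colon z\to y$. Since $F$ preserves source, target and composition and is bijective on morphisms, it follows that $f\bigl(J_X([c])\bigr)=J_Y(F[c])$ and $f^{-1}\bigl(J_Y([d])\bigr)=J_X(F^{-1}[d])$, so $f$ matches the two families of diamonds bijectively; the timelike case is identical with $I_X$ in place of $J_X$. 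By Theorem~\ref{thm:Alexandrov-like} these families furnish a basis (timelike case) respectively a neighborhood basis (causal case) of the manifold topology, whence $f$ and $f^{-1}$ carry (neighborhood) bases to (neighborhood) bases and are mutually inverse homeomorphisms. In the causal case one must additionally verify that the property \enquote{$J_X([c])$ is a neighborhood of $x$} is preserved by $F$; this should be encoded categorically by the existence of a timelike, i.e.\ interior, factorisation of $[c]$ through $x$, and is the one delicate point of the topological step.

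It remains to recover the causal structure and to pin down $F=f_\ast$. The existence of a morphism $x\to y$ in $\Pi^{\mathrm c}(X)$ is exactly the causal relation $x\le y$ (and in $\Pi^{\mathrm t}(X)$ the chronological relation $x\ll y$), which is manifestly preserved by $F$, so $f$ is an order isomorphism for $\le$ (resp.\ $\ll$). Because the defining property of a simple neighborhood in Lemma~\ref{lem:simple nbhds} is a condition on the topology and the chronological order alone, $f$ and $f^{-1}$ map simple neighborhoods to simple neighborhoods; combined with the local characterisation of causal curves inside such neighborhoods (a continuous curve is future directed causal iff it is locally monotone for the causal order), this yields that $f$ and $f^{-1}$ send future directed causal (resp.\ timelike) paths to such, so $f_\ast$ is defined. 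To identify $F$ with $f_\ast$ I would use that inside a simple neighborhood $U$ any two causally related points are joined by a \emph{unique} causal homotopy class, so a morphism whose diamond lies in $U$ is determined by its endpoints. Here the clean point is that every representative of $[c]$ has image contained in $J_X([c])$ (each of its points yields a factorisation), so smallness $J_X([c])\subseteq U$ forces representatives to stay in $U$; applying $f$ and using $J_Y(F[c])=f(J_X([c]))\subseteq f(U)$, both $F[c]$ and $f_\ast[c]=[f\circ c]$ are realised inside the simple neighborhood $f(U)$ with the same endpoints, hence coincide. Covering a causal path by simple neighborhoods and subdividing, every morphism is a finite composite of such small morphisms, so $F$ and $f_\ast$ agree everywhere by functoriality.

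The main obstacle is this final rigidity step: everything hinges on the local uniqueness of causal homotopy classes inside simple neighborhoods, so that small morphisms reduce to their endpoints, and on the local monotonicity characterisation of causal curves, both furnished by Lemma~\ref{lem:simple nbhds}. Matching the abstract factorisations produced by $F$ with genuinely subdivided paths, and controlling that all intermediate points remain inside the simple neighborhoods after applying $f$, is where the real work lies; by contrast the homeomorphism statement is a comparatively soft consequence of the diamonds being categorical, modulo the neighborhood-detection point flagged above.
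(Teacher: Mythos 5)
Your proposal is correct and follows essentially the same route as the paper: the categorical characterization of the diamonds yields $f(I_X([c]))=I_Y(F([c]))$ and hence, via theorem~\ref{thm:Alexandrov-like}, that $f$ is a homeomorphism, and the local uniqueness of homotopy classes in the neighborhoods of lemma~\ref{lem:simple nbhds} pins down $F=f_\ast$ on short morphisms, after which one subdivides. The one imprecision is your claim that $f$ preserves simple neighborhoods because their defining property depends \enquote{on the topology and the chronological order alone} --- it in fact involves timelike homotopy classes in $U$, not just the order --- but this is easily repaired as in the paper by choosing the simple neighborhood in the target $Y$ and observing that $F$ of each restricted subpath is a nonempty class of timelike paths contained in it, which gives the local monotonicity of $f\circ c$ directly.
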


Note that in ii), the map $f$ is the object map of $F$. If the functor $F$ is not an isomorphism, its object map will, in general, not be continuous.
Furthermore, theorem~\ref{thm:conformal structure} becomes wrong if \enquote{homeomorphism} is replaced by \enquote{diffeomorphism}. For example, the map \[ f\colon (\RR^2,-\dd x_1\dd x_2)\to (\RR^2,-\dd x_1\dd x_2), \quad (x_1,x_2)\mapsto(x_1\!^3,x_2\!^3) \] is not a diffeomorphism, but induces an isomorphism $F=f_*$.
But if we restrict $f$ to be a diffeomorphism (and especially in the case $f=\id$) a more specific conclusion can be made:

\begin{corollary}
\label{cor:conformal structure}
Let $(X,g)$ and $(Y,h)$ be two spacetimes and $F\colon \Pi^{\mathrm{c}}(X) \to \Pi^{\mathrm{c}}(Y)$ or $F\colon \Pi^{\mathrm{t}}(X) \to \Pi^{\mathrm{t}}(Y)$ an isomorphism of (semi-)categories. If the object map of $F$ is a diffeomorphism, then it is a conformal diffeomorphism.
\end{corollary}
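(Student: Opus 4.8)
The plan is to localise the statement to the tangent spaces and reduce it to pointwise linear algebra, the extra regularity being precisely what turns this localisation into something usable. Write $f$ for the object map of $F$. By Theorem~\ref{thm:conformal structure}, an isomorphism $F$ is induced by a homeomorphism $f$ such that both $f$ and $f^{-1}$ send future-directed causal (resp.\ timelike) paths to future-directed causal (resp.\ timelike) paths; we now additionally assume that $f$ is a diffeomorphism and must promote this to conformality, i.e.\ $f^\ast h = \lambda\, g$ for a positive smooth function $\lambda$ on $X$.

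First I would show that at every $x\in X$ the differential $df_x\colon T_xX\to T_{f(x)}Y$ maps the future causal cone of $g_x$ onto the future causal cone of $h_{f(x)}$. Given a future-directed causal vector $v\in T_xX$, the geodesic $t\mapsto\exp_x(tv)$ is a smooth future-directed causal path near $0$ (a null geodesic when $v$ is lightlike), so its image under $f$ is again a smooth future-directed causal path and hence has future-directed causal initial velocity $df_x(v)$; here I use the standard fact that a smooth curve is a $\mathcal C^0$ future-directed causal path exactly when its velocity is future-directed causal throughout. Running the same argument for the diffeomorphism $f^{-1}$ gives the reverse inclusion, so $df_x$ restricts to a bijection of the closed future cones (in the timelike case one argues identically with the open timelike cones and then passes to closures). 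As a linear isomorphism it therefore maps the topological boundary of the closed future cone onto that at $f(x)$, i.e.\ the future null cone onto the future null cone, and by linearity the full null cone $\{g_x=0\}$ onto $\{h_{f(x)}=0\}$.

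The second ingredient is the linear-algebra lemma that a linear isomorphism of Lorentzian vector spaces preserving null cones is conformal. Setting $b\coloneqq g_x$ and $b'\coloneqq df_x^\ast h_{f(x)}$ and fixing a $b$-orthonormal basis $e_0,\dots,e_{n-1}$ with $b(e_0,e_0)=-1$, testing $b'$ on the null vectors $e_0\pm e_i$ and $e_0+\tfrac{1}{\sqrt{2}}(e_i+e_j)$ forces all off-diagonal entries of $b'$ to vanish and all diagonal entries to agree up to the correct sign, whence $b'=\lambda(x)\,b$ for a scalar $\lambda(x)$; evaluating on a future timelike vector and using that $df_x$ preserves the future cone gives $\lambda(x)>0$. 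Thus $f^\ast h=\lambda\, g$ holds pointwise. Smoothness of $\lambda$ is where the diffeomorphism hypothesis really enters: $f^\ast h$ is a smooth symmetric $2$-tensor, so choosing a global timelike vector field $V$ (available by time-orientability, with $g(V,V)<0$ everywhere) gives $\lambda=(f^\ast h)(V,V)/g(V,V)$, a quotient of smooth functions with nowhere-vanishing denominator, hence smooth and positive. Therefore $f^\ast h=\lambda\, g$ as smooth tensors and $f$ is a conformal diffeomorphism.

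I expect the genuine difficulty to lie in the first step: faithfully transferring the continuous, path-level preservation of the causal/timelike structure to the infinitesimal cone correspondence $df_x(\overline{C_x})=\overline{C^Y_{f(x)}}$, in particular realising the null boundary directions by smooth causal curves and invoking the precise $\mathcal C^0$-versus-smooth causality equivalence. Once the cones are known to match pointwise, the conformal rigidity and the smoothness of the factor are routine.
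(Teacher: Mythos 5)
Your argument is correct and follows the same route as the paper: use Theorem~\ref{thm:conformal structure} to get path-level preservation of causality by $f$ and $f^{-1}$, differentiate along geodesics to see that $df_x$ carries light cones to light cones preserving time orientation, and conclude conformality. The only difference is that the paper delegates the final step (a cone-preserving linear isomorphism of Lorentzian vector spaces is conformal) to the standard reference \cite[Lemma 2.1 et seq.]{GlobalLorentzianGeometry}, whereas you prove it by hand together with the smoothness of the conformal factor; your extra details are sound.
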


For the following results, the reader may forget about the semicategory-structure of $\Pi^{\mathrm{t}}(X)$. Instead, we endow $P(X)$ with the compact-open topology, $P^\mathrm{t}(X)$ with the subset topology and $\Pi^{\mathrm{t}}(X)$ with the resulting quotient topology, as in \eqref{eqn:morphism_sets_1}.

For reference, the following lemma explains the topology of the fundamental groupoid $\Pi(X)$ (without restriction to timelike paths), as well as giving a construction of the universal covering of $X$:
\begin{lem}\label{lem:Topology of fundamental groupoid}
For a path connected, locally path connected and semilocally simply connected topological space $X$, the following statements are true:
\begin{enumerate}[i)]
\item 
The start/endpoint map $(s,e)\colon \Pi(X) \to X\times X$ is a covering map.
\item Let $\tilde X \coloneqq e^{-1}(x_0)$. Then $s\vert_{\tilde X}\colon \tilde X \to X$ is a universal covering.
\item The fibers of $(s,e)$, i.e.\ the sets $\Pi(X)(x,y)$ are discrete subsets of $\Pi(X)$.
\item
If $X$ is an $n$-dimensional manifold, $\Pi(X)$ is a $2n$-dimensional manifold. 
\end{enumerate}
\end{lem}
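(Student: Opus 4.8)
The plan is to run standard covering-space theory, the only genuine work being to reconcile the quotient-of-compact-open topology on $\Pi(X)$ with the usual covering topology. Throughout I would fix a basis of \emph{simple} open sets: path-connected open $U$ for which $\pi_1(U)\to\pi_1(X)$ is trivial. Such sets exist and form a basis because $X$ is locally path-connected and semilocally simply connected, and they have the decisive property that any two paths in $U$ sharing both endpoints are homotopic rel endpoints in $X$ (their concatenation is a loop in $U$, hence null-homotopic in $X$).

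First I would prove (i), from which the rest follows. Fix $[\gamma_0]\in\Pi(X)$ with $s([\gamma_0])=x$, $e([\gamma_0])=y$, and choose simple neighborhoods $U\ni x$, $V\ni y$. Choosing, for each $x'\in U$ and $y'\in V$, auxiliary paths $\alpha_{x'}$ in $U$ from $x'$ to $x$ and $\beta_{y'}$ in $V$ from $y$ to $y'$, the assignment $((x',y'),[\delta])\mapsto[\beta_{y'}\,\delta\,\alpha_{x'}]$ defines a bijection $\Phi\colon (U\times V)\times\Pi(X)(x,y)\to(s,e)^{-1}(U\times V)$; it is well defined and injective precisely because of the homotopy-uniqueness property of simple sets, which makes the value independent of the choices. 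This is the candidate local trivialization, with the fiber $\Pi(X)(x,y)$ carried along as a discrete set.

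The main obstacle is showing that $\Phi$ is a homeomorphism for the quotient topology, i.e.\ that the ``fiber coordinate'' is locally constant and that concatenation is continuous in the relevant sense. Here I would use that concatenation of composable paths is continuous for the compact-open topology and descends to $\Pi(X)$, so that $\Phi$ is continuous; for the reverse direction I would exhibit the inverse $[c]\mapsto((s(c),e(c)),[\beta_{e(c)}^{-1}c\,\alpha_{s(c)}^{-1}])$ and argue that its fiber component is locally constant, again invoking homotopy-uniqueness inside the simple sets $U,V$ together with the subbasic open sets of the compact-open topology. This step is where the regularity hypotheses on $X$ are really used, and is the part I expect to require the most care. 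Granting it, $(s,e)$ is a covering map and (iii) is immediate, since the fibers of a covering are discrete.

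For (ii), restricting the covering $(s,e)$ over the slice $X\times\{x_0\}$ yields that $s\colon e^{-1}(x_0)\to X$ is a covering. To see it is the universal one, I would identify $e^{-1}(x_0)$ via path reversal $[\gamma]\mapsto[\bar\gamma]$ with the standard model of the universal cover as homotopy classes of paths emanating from $x_0$ (see \cite{TopologyAndGroupoids}), the reversal intertwining $s$ with endpoint evaluation; the only thing to check is again that the two topologies agree, which follows from the trivialization in (i). Since $X$ is path-connected the total space $e^{-1}(x_0)$ is path-connected, and through this identification it is simply connected, so $s|_{e^{-1}(x_0)}$ is a universal covering. Finally, for (iv), when $X$ is an $n$-manifold the base $X\times X$ is a $2n$-manifold, and the total space of a covering inherits its local model: $\Pi(X)$ is locally homeomorphic to $X\times X$, hence locally euclidean of dimension $2n$; it is Hausdorff because the base is and the fibers are discrete (points in a common fiber are separated by distinct sheets of an evenly covered neighborhood), and second countable because the base is and the fibers are countable (each $\Pi(X)(x,y)$ is a torsor over the countable group $\pi_1(X,x)$). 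Thus $\Pi(X)$ is a $2n$-dimensional manifold.
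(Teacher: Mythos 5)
Your proof is correct in outline but follows a genuinely different route from the paper's. You build the covering structure of $(s,e)$ directly, by trivializing over products $U\times V$ of simple open sets via the translation map $\bigl((x',y'),[\delta]\bigr)\mapsto[\beta_{y'}\,\delta\,\alpha_{x'}]$, and then obtain the universal cover in (ii) by restricting this covering over a slice; the paper instead takes the universal covering $p\colon\tilde X\to X$ as given, observes that $(s,e)_{\Pi(\tilde X)}$ is a homeomorphism because $\tilde X$ is simply connected, proves that $p_\ast\colon\Pi(\tilde X)\to\Pi(X)$ is a covering via the homotopy lifting property, deduces that $(s,e)_{\Pi(X)}$ is a covering from the factorization $(s,e)_{\Pi(X)}\circ p_\ast=(p,p)\circ(s,e)_{\Pi(\tilde X)}$, and reads off (ii) from the deck transformation groups. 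Your approach is more self-contained (it reconstructs rather than presupposes the universal cover) and runs parallel to the paper's own treatment of $\Pi^{\mathrm t}(X)$ in proposition~\ref{prop:(s,e) is open} and lemma~\ref{lem:U([a],[b],[c]) open}, so it would unify the two arguments; the paper's route outsources the point-set difficulties to classical covering theory. Be aware that the step you defer --- openness of the sheets $\Phi\bigl((U\times V)\times\{[\delta]\}\bigr)$ in the quotient of the compact-open topology, equivalently local constancy of the fiber coordinate --- is the real content of (i): homotopy-uniqueness in $U$ and $V$ alone does not suffice, and you must cover the whole image of a representative path by simple sets and run a Lebesgue-number argument, exactly as the paper does for the sets $\mathcal U([a],[b],[c])$ in lemma~\ref{lem:U([a],[b],[c]) open}. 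Your additional checks for (iv) (Hausdorffness from even covering, second countability from countability of $\pi_1(X)$) are correct and make explicit details the paper leaves implicit.
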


The proof is an exercise in covering theory and is included in the appendix.
In the case of $\Pi^{\mathrm{t}}(X)$, the situation is similar, but only a weaker conclusion can be made. The start/endpoint map cannot be a covering, since the cardinality of the fibers $\Pi^{\mathrm{t}}(X)(x,y)$ depends on the points $x,y\in X$.

\goodbreak

\begin{theorem}
\label{thm:Topology of Pchron}
Let $X$ be an $n$-dimensional spacetime. Then
\begin{enumerate}[i)]
\item $(s,e)\colon \Pi^{\mathrm{t}}(X) \to X\times X$ is a local homeomorphism. 
\item the sets $\Pi^{\mathrm{t}}(X)(x,y)$ are discrete subsets of $\Pi^{\mathrm{t}}(X)$.
\item $\Pi^{\mathrm{t}}(X)$ is locally homeomorphic to $\RR^{2n}$ but not necessarily Hausdorff.
\end{enumerate}
\end{theorem}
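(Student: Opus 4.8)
The plan is to make (i) the crux and read (ii), (iii) off it, producing the failure of Hausdorffness in (iii) by a separate explicit example. First I would record that $(s,e)$ is continuous: the endpoint evaluations $P(X)\to X$ are continuous in the compact--open topology (as $[0,1]$ is compact), they are constant on timelike homotopy classes, so by the universal property of the quotient topology they descend to a continuous map on $\Pi^{\mathrm t}(X)$. I would also note that the quotient map $\pi\colon P^{\mathrm t}(X)\to\Pi^{\mathrm t}(X)$ is open, since the saturation of an open set is open: any sufficiently $C^0$-small perturbation of a timelike path is timelike homotopic to it (the linear homotopy read in a chart stays inside the open timelike cones), so a whole neighbourhood of each representative lies in its saturation.

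To build local inverses I fix $[c]\in\Pi^{\mathrm t}(X)(x,y)$ and cut $c$ at interior points $p=c(t_0)$, $q=c(t_1)$ so that the initial piece lies in a simple neighbourhood $U\ni x$ and the final piece in a simple neighbourhood $V\ni y$ (lemma~\ref{lem:simple nbhds}). The defining feature of a simple neighbourhood is that any two chronologically related points are joined by a unique timelike homotopy class inside it, that these classes compose, and that they depend continuously on their endpoints; write $\tau_U,\tau_V$ for the resulting class-valued maps. On the open set $O=(I^-_U(p)\cap U)\times(I^+_V(q)\cap V)\ni(x,y)$ I then define $\sigma(x',y')\coloneqq\tau_V(q,y')\,[m]\,\tau_U(x',p)$, where $m=c|_{[t_0,t_1]}$ is the fixed middle piece. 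Then $\sigma$ is continuous (continuity of $\tau_U,\tau_V$ and of concatenation), it satisfies $(s,e)\circ\sigma=\mathrm{id}_O$, and $\sigma(x,y)=[c]$ because $\tau_U(x,p)=[c|_{[0,t_0]}]$ and $\tau_V(q,y)=[c|_{[t_1,1]}]$ by uniqueness of the local classes.

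It remains to see that $W\coloneqq\sigma(O)$ is open, for then $\sigma$ and $(s,e)|_W$ are mutually inverse continuous bijections and $(s,e)$ restricts to a homeomorphism $W\to O$. Using that $\pi$ is open it suffices to exhibit a basic neighbourhood $B\ni c$ in $P^{\mathrm t}(X)$ with $\pi(B)\subseteq W$, that is, to show that every timelike path $c'$ sufficiently close to $c$ is reconstructed from its endpoints, $[c']=\sigma(c'(0),c'(1))$. This is the step I expect to be the main work: subdividing $c$ into pieces lying in simple neighbourhoods $U_1,\dots,U_N$, each piece of a nearby $c'$ again lies in $U_i$, so $[c']$ is the composite of the unique local classes $\tau_{U_i}$ through the \emph{moving} intermediate points $c'(s_i)$; one must then slide these points back to the fixed $p_i$ inside the overlaps $U_i\cap U_{i+1}$, using uniqueness and transitivity of the local classes, to identify $[c']$ with $\sigma(c'(0),c'(1))$. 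Equivalently, one can phrase the whole argument as showing that the forgetful map $\iota\colon\Pi^{\mathrm t}(X)\to\Pi(X)$ is a local homeomorphism onto an open subset and compose with the covering of lemma~\ref{lem:Topology of fundamental groupoid}; the local injectivity of $\iota$ is the same reconstruction statement.

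Granting (i), parts (ii) and (iii) follow quickly. The fibre $\Pi^{\mathrm t}(X)(x,y)=(s,e)^{-1}(x,y)$ is discrete, since on the injectivity neighbourhood $W$ of any of its points $(s,e)$ is injective and $W$ meets the fibre only in that point; and local homeomorphy of $(s,e)$ onto the $2n$-manifold $X\times X$ makes every point of $\Pi^{\mathrm t}(X)$ have a neighbourhood homeomorphic to an open subset of $\RR^{2n}$, hence to $\RR^{2n}$. For the failure of Hausdorffness I would first use the clean reduction: since $(s,e)$ is a local homeomorphism onto the Hausdorff space $X\times X$, two points can fail to be separated only if they lie in a common fibre $\Pi^{\mathrm t}(X)(x,y)$ and their local sheets coincide over endpoints accumulating at $(x,y)$; equivalently there must be two timelike homotopy classes from $x$ to $y$ that become timelike homotopic under an arbitrarily small displacement of the endpoints. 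I would then produce such a pair in an explicit spacetime where two timelike curves from $x$ to $y$ are freely homotopic but \emph{not} timelike homotopic, the obstructing intermediate curves being non-timelike, while a small change of the endpoints removes the obstruction; a focusing fold-caustic region, or flat $\RR^{2,1}$ with a convex obstacle removed, is the kind of configuration I expect to realise this. Verifying that such an example genuinely yields inseparable classes is the second place where real care will be needed, since the purely topological obstructions (for instance winding around a removed point, which never merge) do \emph{not} suffice; the merging must come from the causal, not the homotopical, structure.
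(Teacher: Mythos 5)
Your overall strategy for part i) is close in spirit to the paper's (local sections over products of small diamonds, built by cutting $c$ inside neighborhoods from lemma~\ref{lem:simple nbhds}), and your derivations of ii) and iii) from i) are fine. But there are two load-bearing gaps. First, the openness of the quotient map $\pi\colon P^{\mathrm t}(X)\to\Pi^{\mathrm t}(X)$ is asserted with a justification that does not work: timelike homotopy in this paper is \emph{relative to endpoints}, so a $C^0$-small perturbation of a timelike path is in general not timelike homotopic to it at all (its endpoints move), and for merely continuous timelike paths the ``linear homotopy in a chart'' is not available anyway. Since your proof that $W=\sigma(O)$ is open consists of exhibiting one basic set $B$ with $\pi(B)\subseteq W$ and then invoking openness of $\pi$, this gap propagates into the main claim. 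To show $W$ is open in the quotient topology you must show $\pi^{-1}(W)$ is open, i.e.\ produce a neighborhood of \emph{every} path $d$ whose class lies in $W$, not only of the distinguished concatenation representatives $\sigma(x',y')$; this is exactly what the paper does in lemma~\ref{lem:U([a],[b],[c]) open} by defining the saturated sets $\mathcal{U}([a],[b],[c])$ algebraically (definition~\ref{def:U([a],[b],[c])}) and building an explicit open $\mathcal{U}_d$ around an arbitrary representative $d$. Your ``reconstruction'' step (sliding the intermediate points $c'(s_i)$ back to the $p_i$ inside overlaps of simple neighborhoods) is indeed the right idea and is the content of the diagram chase in that lemma, but as written it is only a plan, and it is applied at the wrong set of representatives. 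Note also that the paper gets openness and continuity of $(s,e)$ itself from proposition~\ref{prop:(s,e) is open} without ever needing $\pi$ to be open.

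Second, the non-Hausdorff part of iii) is not actually established: you correctly reduce it to finding two distinct classes in one fibre that merge under arbitrarily small displacement of the endpoints, and you rightly warn that winding-type obstructions never merge, but the candidates you then propose (Minkowski space minus a convex obstacle, a fold caustic) are precisely the kind of configuration where the obstruction is topological (the complement of a timelike worldline is not simply connected) or where merging is unverified. The paper's example~\ref{ex:Non-Hausdorff} is a globally hyperbolic product $\RR\times S$ with $S$ an elongated ellipsoid, where Morse theory on the path space shows there are exactly two timelike classes from $(0,p)$ to $(T,q)$ for $\ell_2<T\le\ell_3$ and exactly one for $T>\ell_3$, the merging being caused by an index-one connecting geodesic of length $\ell_3$; this is a causal, not homotopical, mechanism and it is fully verified there. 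Without such a worked example, part iii) remains a conjecture in your write-up.
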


During the proof of this theorem, we will construct a purely algebraic description of the topology on $\Pi^{\mathrm{t}}(X)$. 
Example~\ref{ex:Non-Hausdorff} will show that it can indeed be non-Hausdorff, even under strong conditions on the spacetime $X$; in the example, $X$ is globally hyperbolic with compact, simply connected Cauchy surfaces.
The example will also show that theorem~\ref{thm:Topology of Pchron} becomes wrong if $\Pi^{\mathrm{t}}(X)$ is replaced by $\Pi^{\mathrm{c}}(X)$.

\section{Proof of theorem~\ref{thm:Alexandrov-like}}

For the sake of simplicity, the statements and proofs in this section are only presented in their timelike variant. If not stated otherwise, the statements and their proofs remain true if \enquote{timelike} is replaced by \enquote{causal}, $\Pi^\mathrm{t}$ is replaced by $\Pi^\mathrm{c}$ and $I_X$ is replaced by $J_X$.

\goodbreak
Let $X$ be a smooth spacetime.
The proofs of the main theorems all hinge on the following lemma:
\begin{lem}[timelike variant]\label{lem:simple nbhds} 
Every point $p\in X$ has arbitrarily small open neighborhoods $U$ such that for each $x,y\in U$, there is at most one timelike homotopy class in $\Pi^{\mathrm{t}}(U)(x,y)$.
Furthermore, $U$ can be chosen such that it is strongly causal as a Lorentzian submanifold.
\end{lem}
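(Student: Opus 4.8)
The plan is to construct the neighborhoods $U$ explicitly using convex normal neighborhoods coming from the exponential map, and then to leverage the fact that on a sufficiently small convex neighborhood the causal structure is controlled by a single coordinate chart in which the metric is a small perturbation of a flat Minkowski metric. First I would fix $p\in X$ and pick, via the standard results on normal neighborhoods (see e.g.\ the cited \textit{Global Lorentzian Geometry}), a geodesically convex open set $V\ni p$ that is moreover strongly causal; such sets exist and form a neighborhood basis of $p$, so the ``arbitrarily small'' and ``strongly causal'' requirements are handled at once. The real content is the uniqueness of the timelike homotopy class, so I would shrink $V$ further to a set $U$ on which I can compare all timelike paths between two given points.

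\emph{Key steps.} On a convex normal neighborhood, any two points $x,y$ with $x\ll y$ are joined by a \emph{unique} geodesic lying in $U$, and this geodesic is timelike; this is the standard convexity statement. The goal is to show that \emph{every} future-directed timelike path from $x$ to $y$ inside $U$ is timelike-homotopic (rel endpoints) to that distinguished geodesic. To do this I would introduce normal coordinates centered at $p$, so that $U$ becomes an open subset of $\RR^n$, the point $p$ is the origin, and $g$ agrees with the Minkowski metric $\eta$ at $p$. By continuity of $g$, after shrinking $U$ I may assume that the light cones of $g$ are uniformly close to those of $\eta$, i.e.\ that there are two constant Minkowski-type metrics $\eta_-$ and $\eta_+$ with $\eta_- \preceq g \preceq \eta_+$ on $U$ in the sense that $g$-timelike vectors are $\eta_+$-timelike and $\eta_-$-timelike vectors are $g$-timelike, with both cones arbitrarily narrow. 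The straight-line (in coordinates) homotopy
\[
H(s,t) = (1-s)\, c(t) + s\, \gamma(t)
\]
between an arbitrary $g$-timelike path $c$ and the geodesic $\gamma$ is the natural candidate, but it need \emph{not} be timelike at intermediate $s$, so this naive homotopy does not work directly.

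\emph{Main obstacle.} The hard part is exactly that the straight-line homotopy between two future-directed timelike paths can leave the timelike cone. The way I would overcome this is to reparametrize in the ``time'' coordinate: in the cone-narrowing coordinates above, a future-directed $g$-timelike path is a graph $t\mapsto(x^0(t),\vec x(t))$ over its monotonically increasing time coordinate $x^0$, with spatial velocity strictly dominated by $\dot x^0$ (uniformly, by the cone bound). Writing every timelike path from $x$ to $y$ as such a graph and linearly interpolating the spatial part $\vec x$ while keeping a common monotone time parametrization, one gets a homotopy through graphs whose spatial slope stays within the convex hull of the endpoints' slopes, which the uniform cone bound keeps inside the $g$-timelike cone. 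Verifying that this interpolation (i) stays inside $U$, (ii) remains future-directed $g$-timelike for all $s$, and (iii) has fixed endpoints is the crux; it reduces to the elementary fact that the set of future-directed timelike tangent directions is, after the cone-narrowing, contained in a convex cone, so convex combinations of admissible velocity fields remain admissible. Once this is established, every element of $\Pi^{\mathrm t}(U)(x,y)$ is shown to equal the class of the distinguished geodesic, giving at most one class. The causal variant follows by the same argument, replacing strict cone inequalities by non-strict ones and timelike paths by causal ones, which is why the proof needs only the convexity and cone-narrowing input and not any global causality condition on $X$.
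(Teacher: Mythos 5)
There is a genuine gap in the crux step. Your reduction of timelikeness of the interpolated curves to ``convex combinations of admissible velocity fields remain admissible'' does not work, because the cone--narrowing gives you two \emph{different} constant cones: an outer cone (your $\eta_+$, wider than every $g$-cone over $U$) and an inner cone (your $\eta_-$, narrower than every $g$-cone over $U$). The velocities of a future-directed $g$-timelike curve are only guaranteed to lie in the outer cone, so their convex combinations lie in the outer cone as well; but to conclude that the interpolated graph is $g$-timelike at the \emph{interpolated} base point you would need the combined velocity to lie in the $g$-cone there, for which membership in the inner cone would suffice and membership in the outer cone does not. There is no uniform margin to exploit: since $g$-timelike vectors may be arbitrarily close to the light cone, for any fixed $U$ on which $g$ is nonconstant one can choose a nearly-null timelike path $c$ whose coordinate-linear interpolation with the geodesic fails to be timelike for small $s>0$ (the allowed slope at the interpolated point can be strictly smaller than the slope inherited from $c$). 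A secondary issue is that the paths in question are only continuous timelike paths, so ``velocity fields'' and slopes would in any case have to be replaced by Lipschitz-type estimates, which run into the same inner-versus-outer cone problem.

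The paper avoids interpolating velocities altogether. In a simply convex normal neighborhood $U$ it uses the sweep homotopy $c_s \coloneqq c\vert_{[s,1]} \cdot \gamma_{c(0)c(s)}$, i.e.\ it replaces the initial segment $c\vert_{[0,s]}$ by the unique $U$-geodesic from $c(0)$ to $c(s)$. Each intermediate curve is automatically future-directed timelike: the geodesic piece is timelike because $c(0)\ll_U c(s)$ and, in a convex neighborhood, chronologically related points are joined by a timelike geodesic (Hawking--Ellis, Prop.~4.5.1), while the tail is a piece of the original timelike curve; continuity in $s$ follows from the continuous dependence of $\gamma_{xy}$ on its endpoints. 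This deforms every class onto $[\gamma_{xy}]$ with no cone-comparison needed. Your treatment of the ``arbitrarily small and strongly causal'' part is essentially the paper's (comparison with a constant flat metric of wider cones), except that you cite the existence of such neighborhoods where the paper gives the short comparison argument; that part is fine.
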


\begin{cor}[timelike variant]\label{corr:simple nbhds}
Let $U\subseteq X$ be as in lemma~\ref{lem:simple nbhds}.
\begin{itemize}
\item
The functor $\Pi^t(U) \to \Pi^t(X)$ induced by the inclusion $U \hookrightarrow X$ 
is injective on both the objects and morphisms. Therefore, we will always interpret $\Pi^t(U)$ as sub-semicategory of $\Pi^t(X)$.
\item
Every diagram in $\Pi^t(X)$ that only involves morphisms from $\Pi^t(U)$ is a commutative diagram.
\end{itemize}
\end{cor}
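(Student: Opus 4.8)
The plan is to derive both bullet points almost directly from the defining property of $U$ supplied by lemma~\ref{lem:simple nbhds}, namely that $\Pi^{\mathrm{t}}(U)(x,y)$ contains at most one morphism for all $x,y\in U$. First I would verify that the inclusion $U\hookrightarrow X$ genuinely induces a functor $\Pi^{\mathrm{t}}(U)\to\Pi^{\mathrm{t}}(X)$: since $U$ carries the restricted metric, a curve contained in $U$ is future-directed timelike for $g\vert_U$ if and only if it is so for $g$, and a timelike homotopy running inside $U$ is in particular a timelike homotopy inside $X$. Hence the class map $[c]_U\mapsto[c]_X$ is well defined, and because concatenation and reparametrisation are computed identically in $U$ and in $X$, it preserves composition; this is the functor $F$ in question.

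For injectivity on objects there is nothing to prove, as the object map is the set-theoretic inclusion $U\hookrightarrow X$. For injectivity on morphisms, suppose $F(\alpha)=F(\beta)$ for morphisms $\alpha,\beta$ of $\Pi^{\mathrm{t}}(U)$. A functor is compatible with the source and endpoint maps, so $F(\alpha)$ and $F(\beta)$ have equal source and target in $X$; by injectivity on objects, $\alpha$ and $\beta$ share a source $x$ and a target $y$ in $U$, i.e.\ $\alpha,\beta\in\Pi^{\mathrm{t}}(U)(x,y)$. Lemma~\ref{lem:simple nbhds} forces this hom-set to be a singleton, whence $\alpha=\beta$. This proves the first bullet and justifies regarding $\Pi^{\mathrm{t}}(U)$ as a sub-semicategory of $\Pi^{\mathrm{t}}(X)$.

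For the second bullet, consider a diagram in $\Pi^{\mathrm{t}}(X)$ all of whose morphisms lie in $\Pi^{\mathrm{t}}(U)$; in particular every object occurring in it lies in $U$. The key observation is that any composite along a directed path in such a diagram, computed in $\Pi^{\mathrm{t}}(X)$, again lies in $\Pi^{\mathrm{t}}(U)$: choosing timelike representatives in $U$ for each edge and concatenating yields a future-directed timelike path that never leaves $U$, because all the gluing points are objects of the diagram and hence in $U$. Thus the composite represents a morphism of $\Pi^{\mathrm{t}}(U)$, and it matches the $\Pi^{\mathrm{t}}(X)$-composite since $F$ preserves composition. Now take two parallel composites from an object $x$ to an object $y$. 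Both are images under $F$ of morphisms in $\Pi^{\mathrm{t}}(U)(x,y)$, which is again a singleton by lemma~\ref{lem:simple nbhds}; therefore these two preimages coincide and so do their images. Hence every pair of parallel composites agrees and the diagram commutes.

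The only genuinely substantive step is the verification that composites of morphisms from $\Pi^{\mathrm{t}}(U)$ never escape $U$; once this is in place, both assertions are immediate from the at-most-one-class property. In line with the convention of this section, the causal variant follows by the identical argument, reading $\Pi^{\mathrm{c}}$ for $\Pi^{\mathrm{t}}$ and $J_X$ for $I_X$.
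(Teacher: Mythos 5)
Your proof is correct and follows exactly the route the paper intends: the paper states this corollary without a separate proof, treating both bullets as immediate consequences of the at-most-one-class property of $\Pi^{\mathrm{t}}(U)(x,y)$ from lemma~\ref{lem:simple nbhds}, which is precisely how you argue. Your explicit verification that composites of morphisms represented by paths in $U$ are again represented by paths in $U$ (so that parallel composites land in the same singleton hom-set) is the one substantive point, and you handle it correctly.
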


\begin{proof}[Proof of lemma~\ref{lem:simple nbhds}]
We show that every simply convex normal neighborhood $U$ of $p$ has the desired property.
Any points $x,y\in U$ can be joined by a unique geodesic $\gamma_{xy}$ in $U$, which depends continuously on the endpoints.
If there is any timelike path in $U$ from $x$ to $y$, then the geodesic $\gamma_{xy}$ is also timelike \cite[Proposition 4.5.1]{HawkingEllis}.
Hence, for any path $c \in P^\mathrm{t}(X)(x,y)$, the map ${H\colon [0,1] \to P^\mathrm{t}(X)}$, ${s \mapsto c_s \coloneqq c\vert_{[s,1]} \cdot \gamma_{c(0)c(s)}}$ is a timelike homotopy from $c=c_0$ to $\gamma_{xy} = c_1$. This shows that $\Pi^\mathrm{t}(U)(x,y)$ is either empty or equal to $\set{[\gamma_{xy}]}$.

We finish the proof by showing that there are arbitrarily small neighborhoods of $p$ which are both simply convex and strongly causal. First fix an arbitrary chart $\phi\colon \tilde U\to \tilde V\subseteq \RR^n$ around $p$, and a constant Lorentzian metric $h$ on $\RR^n$ such that $h > \phi_*g_p$, i.e. $h$ has a larger light cone than $\phi_\ast g$ at the point $\phi(p)$.
As this is an open condition, we can choose $\tilde U$ small enough such that $h> \phi_*g_x$ holds for all points $x\in\tilde U$.
Since $(\RR^n,h)$ is strongly causal, $(\tilde V,h)$ and $(\tilde V,\phi_\ast g) \cong (\tilde U, g)$ are strongly causal submanifolds of $\RR^n$ or $X$, respectively.

Now, any simply convex normal neighborhood $U\subseteq\tilde U$ of $p$, is also simply convex in $X$ and strongly causal as as submanifold of $X$.
\end{proof}

The neighborhoods $U$ in lemma~\ref{lem:simple nbhds} could be called \emph{timelike simply connected}, in analogy to simply connected fundamental groupoids \cite[p.\ 213]{TopologyAndGroupoids}. Nonexistence of closed (or almost closed) timelike or causal curves in $U$ is necessary, but not sufficient for this property.

For the rest of the paper, we will treat the statement of lemma~\ref{lem:simple nbhds} like an axiom, and we will require neither smoothness of $X$ nor the existence of geodesics anymore. Therefore, all the main theorems of this paper hold true for any $\mathcal C^1$-spacetime $X$ in which the statement of lemma~\ref{lem:simple nbhds} holds. 

\goodbreak
We break up the proof of theorem~\ref{thm:Alexandrov-like} into two further lemmata:

\begin{lem}[timelike variant]\label{lem:diamonds in scnn}
Let $U\subseteq X$ be a neighborhood as in lemma~\ref{lem:simple nbhds} (e.g.\ a simply convex normal neighborhood), $x,y\in U$ and $c\in P^\mathrm{t}(U)(x,y)$.
We denote by $I_U(x,y)$ the chronological diamond in $U$, as a Lorentzian submanifold.

Then we have $I_U(x,y) = I_U([c]) \subseteq I_X([c])$. Equality holds if $\overline{I_U(x,y)}\subseteq U$, where the overline denotes the closure in $X$.
\end{lem}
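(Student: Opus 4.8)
\emph{Overview of the plan.} The statement packages three assertions: the identity $I_U(x,y)=I_U([c])$, the inclusion $I_U([c])\subseteq I_X([c])$, and the upgrade of that inclusion to an equality under the hypothesis $\overline{I_U(x,y)}\subseteq U$. I expect the first two to be immediate from the definitions together with lemma~\ref{lem:simple nbhds}, while the equality is the real content and will be proved by a connectedness argument on the homotopy.

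\emph{The two easy parts.} For $I_U(x,y)=I_U([c])$, the inclusion $\supseteq$ is trivial, since any witnesses $a\in P^\mathrm t(U)(x,z)$, $b\in P^\mathrm t(U)(z,y)$ from the definition of $I_U([c])$ already place $z$ in the chronological diamond $I_U(x,y)$. Conversely, if $z\in I_U(x,y)$, choose any timelike $U$-paths $a\colon x\to z$ and $b\colon z\to y$; then $ba\in P^\mathrm t(U)(x,y)$, and since $c\in P^\mathrm t(U)(x,y)$ is nonempty, lemma~\ref{lem:simple nbhds} forces $\Pi^\mathrm t(U)(x,y)=\set{[c]}$, so $ba\sim_\mathrm{t} c$ in $U$ and $z\in I_U([c])$. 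The inclusion $I_U([c])\subseteq I_X([c])$ is then just the observation that a timelike path in $U$ is a timelike path in $X$ and a homotopy inside $U$ is a homotopy inside $X$, so every witness for $z\in I_U([c])$ is also a witness for $z\in I_X([c])$.

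\emph{The crux: the reverse inclusion.} Assuming $\overline{I_U(x,y)}\subseteq U$, I prove $I_X([c])\subseteq I_U(x,y)$. Given $z\in I_X([c])$, fix witnesses $a\in P^\mathrm t(X)(x,z)$, $b\in P^\mathrm t(X)(z,y)$ and a timelike homotopy $H(s,t)=c_s(t)$ from $c_0=ba$ to $c_1=c$, each $c_s$ future-directed timelike from $x$ to $y$. Run the argument on $S\coloneqq\set{s\in[0,1] : c_s([0,1])\subseteq U}$. It contains $1$ because $c_1=c$ lies in $U$, and it is open because $s\mapsto c_s$ is continuous into the compact-open topology and having image inside the open set $U$ is an open condition. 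The decisive observation is that for $s\in S$ the curve $c_s$ lies entirely in $U$, so its two halves at any interior parameter $t$ are timelike $U$-paths and hence $c_s(t)\in I_U(x,y)$; together with $c_s(0)=x$ and $c_s(1)=y$ this gives $c_s([0,1])\subseteq\overline{I_U(x,y)}\cup\set{x,y}$, which by hypothesis is a closed subset of $U$. Consequently, if $s_n\to s$ with $s_n\in S$, the uniform convergence $c_{s_n}\to c_s$ supplied by continuity of $H$ keeps $c_s([0,1])$ inside this closed set, so $s\in S$; thus $S$ is closed as well. Therefore $S=[0,1]$, and in particular $0\in S$, so $ba$ lies entirely in $U$. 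Its halves $a,b$ are then timelike $U$-paths from $x$ to $z$ and from $z$ to $y$, witnessing $z\in I_U(x,y)$. Combining the three parts yields $I_U(x,y)=I_U([c])=I_X([c])$.

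\emph{Main obstacle.} The step demanding the most care is the closedness of $S$: this is exactly where the hypothesis $\overline{I_U(x,y)}\subseteq U$ (rather than the weaker $I_U(x,y)\subseteq U$) is essential, since it confines every interior homotopy point to the closed core $\overline{I_U(x,y)}$ and so prevents the deformation from leaking out of $U$ in the limit. A routine point to confirm along the way is that a future-directed timelike curve of $X$ whose image lies in $U$ is future-directed timelike for the induced Lorentzian structure on $U$, which holds because the light cones and time-orientation of $U$ are the restrictions of those of $X$.
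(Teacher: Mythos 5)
Your proposal is correct and follows essentially the same route as the paper: the first two parts via lemma~\ref{lem:simple nbhds} and the definition, and the reverse inclusion via a connectedness argument showing the set of homotopy parameters whose curves stay in $U$ is clopen, with closedness coming from the confinement of $U$-paths to $\overline{I_U(x,y)}\subseteq U$. The paper phrases this with two nested sets $S_1\subseteq S_2$ (one open, one closed) that coincide, whereas you show a single set is clopen directly; this is only a cosmetic difference.
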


\begin{proof}
$I_U([c])=I_U(x,y)$ follows directly from definition~\ref{def:diamonds} and lemma~\ref{lem:simple nbhds}, since all timelike paths in $U$ from $x$ to $y$ are in the same timelike homotopy class $[c]\in \Pi^{\mathrm{t}}(U)(x,y)$. Any timelike homotopy in $U$ is also a timelike homotopy in $X$, therefore $I_U([c]) \subseteq I_X([c])$.

If we now assume $I_U([c]) \neq I_X([c])$ there must be a timelike homotopy $H\colon [0,1]^2\to X,(s,t)\mapsto c_s(t)$ starting in $c_0=c$, that is not a timelike homotopy in $U$. Consider the sets
\begin{align*}
S_1&=\set[{s \in [0,1]}]{ \forall t\in[0,1]: c_s(t)\in U}\\
S_2&=\set[{s \in [0,1]}]{\forall t\in[0,1]: c_s(t)\in \overline{I_U(x,y)}}.
\end{align*}

By assumption, $0\in S_1 \subsetneq [0,1]$. Since $[0,1]$ is compact, $U$ is open and $\overline{I_U(x,y)}$ is closed, we see that $S_1$ is open in $[0,1]$ and $S_2$ is closed.

The images of all paths in $P^\mathrm{t}(U)(x,y)$ are contained in $\overline{I_U(x,y)}$, hence $S_1\subseteq S_2$. The assumption $\overline{I_U(x,y)}\subseteq U$ then implies $S_2\subseteq S_1$, hence $S_2=S_1$, which leads to a contradiction.
\end{proof}

Note that the assumption $\overline{I_U(x,y)}\subseteq U$ in the above lemma is nontrivial; it basically ensures that $I_U(x,y)$ does not touch the boundary of $U$.

\begin{lem}\label{lem:Neighborhood basis}
For $c\in P^\mathrm{t}(X)$, $t\in (0,1)$ and $0< \varepsilon < \min(t,1-t)$, let $c_\varepsilon$ be the reparametrization of the path $c\vert_{[t-\varepsilon,t+\varepsilon]}$ to the unit interval.\\
Then, the sets $I_X([c_\varepsilon])$ form an open neighborhood basis of $c(t)$ in the manifold topology. The sets $J_X([c_\varepsilon])$ also form a (generally not open) neighborhood basis of $c(t)$.
\end{lem}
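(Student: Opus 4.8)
The plan is to localize the problem inside a neighborhood $U$ of $c(t)$ as provided by lemma~\ref{lem:simple nbhds}, where the abstract diamonds $I_X([c_\varepsilon])$ and $J_X([c_\varepsilon])$ collapse to ordinary chronological and causal diamonds of the two endpoints. Write $x_\varepsilon \coloneqq c(t-\varepsilon)$ and $y_\varepsilon \coloneqq c(t+\varepsilon)$, so that $c_\varepsilon \in P^\mathrm{t}(X)(x_\varepsilon,y_\varepsilon)$. Splitting $c_\varepsilon$ at its midpoint as $c_\varepsilon \sim_\mathrm{t} ba$ with $a \in P^\mathrm{t}(X)(x_\varepsilon, c(t))$ and $b\in P^\mathrm{t}(X)(c(t), y_\varepsilon)$ shows, directly from definition~\ref{def:diamonds}, that $c(t)\in I_X([c_\varepsilon]) \subseteq J_X([c_\varepsilon])$ for every admissible $\varepsilon$; the inclusion holds because a timelike homotopy is in particular a causal one. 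By theorem~\ref{thm:Alexandrov-like} each $I_X([c_\varepsilon])$ is open, so it is automatically a neighborhood of $c(t)$, and $J_X([c_\varepsilon])$ is a (possibly non-open) neighborhood because it contains the open set $I_X([c_\varepsilon])\ni c(t)$. It remains to prove the basis property: every neighborhood $V$ of $c(t)$ contains some $I_X([c_\varepsilon])$, respectively some $J_X([c_\varepsilon])$.

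For the timelike statement, first fix $U$ as in lemma~\ref{lem:simple nbhds}; in particular $U$ is strongly causal, so on $U$ the Alexandrov and manifold topologies agree and the chronological diamonds $I_U(x,y)$ form a basis. Hence the diamonds containing $c(t)$ form a neighborhood basis, and given $V$ I may shrink it to an open $V'$ with $\overline{V'}$ compact and $\overline{V'}\subseteq U\cap V$, and pick $x_\ast,y_\ast\in U$ with $x_\ast \ll c(t) \ll y_\ast$ and $I_U(x_\ast,y_\ast)\subseteq V'$. Since the chronological relation is open and $c$ is continuous with $c(t\pm\varepsilon)\to c(t)$, for all sufficiently small $\varepsilon$ one has $x_\ast \ll x_\varepsilon$ and $y_\varepsilon \ll y_\ast$, and also $c([t-\varepsilon,t+\varepsilon])\subseteq U$. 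Transitivity of $\ll$ then gives $I_U(x_\varepsilon,y_\varepsilon)\subseteq I_U(x_\ast,y_\ast)\subseteq V'$, whence $\overline{I_U(x_\varepsilon,y_\varepsilon)}\subseteq \overline{V'}\subseteq U$. The closure hypothesis of lemma~\ref{lem:diamonds in scnn} is therefore satisfied, so $I_X([c_\varepsilon]) = I_U(x_\varepsilon,y_\varepsilon)\subseteq V$, as required.

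For the causal statement the difficulty is that the diamonds $J_U(x_\varepsilon,y_\varepsilon)$ are larger, are generally not open, and need not a priori shrink to $c(t)$. I resolve this by squeezing the causal diamonds between chronological ones along the curve: for $0<\varepsilon<\varepsilon'$ the timelike curve $c$ gives $x_{\varepsilon'}\ll x_\varepsilon$ and $y_\varepsilon \ll y_{\varepsilon'}$, so any $z$ with $x_\varepsilon\le z\le y_\varepsilon$ satisfies $x_{\varepsilon'}\ll z \ll y_{\varepsilon'}$; that is, $J_U(x_\varepsilon,y_\varepsilon)\subseteq I_U(x_{\varepsilon'},y_{\varepsilon'})$. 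Choosing $\varepsilon'$ small enough that the timelike part already yields $I_U(x_{\varepsilon'},y_{\varepsilon'})\subseteq V$ and $\overline{I_U(x_{\varepsilon'},y_{\varepsilon'})}\subseteq U$, every $\varepsilon<\varepsilon'$ satisfies $\overline{J_U(x_\varepsilon,y_\varepsilon)}\subseteq \overline{I_U(x_{\varepsilon'},y_{\varepsilon'})}\subseteq U$, so the causal variant of lemma~\ref{lem:diamonds in scnn} gives $J_X([c_\varepsilon])=J_U(x_\varepsilon,y_\varepsilon)\subseteq I_U(x_{\varepsilon'},y_{\varepsilon'})\subseteq V$.

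The main obstacle is exactly this last point: without further causality assumptions the causal relation on $X$ need not be closed, so one cannot argue directly that the causal diamonds shrink by a compactness or limit argument. The squeezing inclusion $J_U(x_\varepsilon,y_\varepsilon)\subseteq I_U(x_{\varepsilon'},y_{\varepsilon'})$, which only uses the monotonicity of the chronological order along the fixed timelike curve together with the already-established timelike case, is what circumvents this and keeps the proof within the purely local, axiomatic framework of lemma~\ref{lem:simple nbhds}.
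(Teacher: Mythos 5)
Your proof is correct and follows essentially the same route as the paper's: localize at $c(t)$ in a neighborhood $U$ from lemma~\ref{lem:simple nbhds}, use strong causality and openness of $\ll$ to shrink the chronological diamonds $I_U(c(t-\varepsilon),c(t+\varepsilon))$ into a given neighborhood with closure in $U$, apply lemma~\ref{lem:diamonds in scnn} to identify them with $I_X([c_\varepsilon])$, and handle the causal case by squeezing $J_U$ between two chronological diamonds along the timelike curve (the paper's chain $I_U(\varepsilon')\subseteq J_U(\varepsilon')\subseteq I_U(\varepsilon)$ is exactly your push-up argument). The only cosmetic quibble is that you cite theorem~\ref{thm:Alexandrov-like} for the openness of $I_X([c_\varepsilon])$, which is circular since that theorem is deduced from this lemma; openness instead follows from the identification $I_X([c_\varepsilon])=I_U(c(t-\varepsilon),c(t+\varepsilon))$ that you establish anyway.
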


\begin{proof} Let $U$ be a neighborhood of $p\coloneqq c(t)$ as in lemma~\ref{lem:simple nbhds}. 
Since $X$ is a manifold, there is some smaller neighborhood $V$ of $p$ with $\overline V\subseteq U$. By strong causality of $U$, the sets $I_U(c(t-\varepsilon),c(t+\varepsilon))$, form a neighborhood basis of $p$, so for small enough $\varepsilon>0$, we have $I_U(c(t-\varepsilon),c(t+\varepsilon))\subseteq V$, hence $\overline{I_U(c(t-\varepsilon),c(t+\varepsilon))}\subseteq \overline V \subseteq U$ and $I_X([c_\varepsilon]) = I_U(c(t-\varepsilon),c(t+\varepsilon))$, by lemma~\ref{lem:diamonds in scnn}. This shows that the sets $I_X([c_\varepsilon])$ also form a neighborhood basis of $c(t)$.

For $0<\varepsilon' < \varepsilon$, we have the inclusions
\[ c(t)\in I_U(c(t-\varepsilon'),c(t+\varepsilon'))\subseteq J_U(c(t-\varepsilon'),c(t+\varepsilon')) \subseteq I_U(c(t-\varepsilon),c(t+\varepsilon)) \]
from which we see that the sets $J_U(c(t-\varepsilon'),c(t+\varepsilon'))$ also form a neighborhood basis of $c(t)$. Furthermore, this implies $\overline{J_U(c(t-\varepsilon'),c(t+\varepsilon'))}\subseteq U$, hence $J_X([c_\varepsilon']) = J_U(c(t-\varepsilon'),c(t+\varepsilon'))$, by the causal variant of lemma~\ref{lem:diamonds in scnn}. Therefore, the sets $J_X([c_\varepsilon])$ also form a neighborhood basis of $c(t)$.
\end{proof}

Note that, even when we only consider causal diamonds, we demand $c$ to be timelike, as $J_U(c(t-\varepsilon),c(t+\varepsilon))$ is not a neighborhood of $c(t)$ if $c$ is a lightlike geodesic.

Theorem~\ref{thm:Alexandrov-like} follows directly from lemma~\ref{lem:Neighborhood basis}: Since there is some timelike curve through any point, lemma~\ref{lem:Neighborhood basis} provides a neighborhood basis of any point. In the timelike case, these neighborhoods are open and therefore form an actual basis.

For the next corollary, remember that we only consider timelike or causal homotopies with fixed start- and endpoints.

\begin{cor}\leavevmode
\label{cor:no inverses}
\begin{enumerate}[i)]
\item
The identity morphisms in $\Pi^{\mathrm{c}}(X)$ are only represented by constant paths.
\item
No non-identity morphism in $\Pi^{\mathrm{c}}(X)$ has an inverse morphism,\\
therefore, $\Pi^{\mathrm{c}}(X)$ is a category, but not a groupoid.
\item
There is no identity morphism in $\Pi^{\mathrm{t}}(X)$,\\
therefore, $\Pi^{\mathrm{t}}(X)$ is a semicategory, but not a category.
\end{enumerate}
\end{cor}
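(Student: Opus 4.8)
The plan is to reduce all three parts to a single geometric fact and to prove them in the order iii), i), ii). The key observation is this: \emph{a nonconstant future-directed causal (in particular, timelike) loop based at $x$ must leave every neighborhood $U$ of $x$ of the kind produced by lemma~\ref{lem:simple nbhds}.} Indeed, such a $U$ is strongly causal as a Lorentzian submanifold, and strong causality forbids nonconstant closed causal curves; hence the image of such a loop cannot be contained in $U$. In each part I would detect this ``escape'' from $U$ by splitting a concatenated path at a point outside $U$ and reading off a contradiction from the diamond description in definition~\ref{def:diamonds}, using that the diamond of a homotopy class depends only on that class.

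For iii) I would argue by contradiction. Suppose $\Pi^{\mathrm{t}}(X)(x,x)$ contained an identity morphism; being a morphism, it is represented by a timelike loop $e$ at $x$, which is automatically nonconstant. Fix a neighborhood $U$ of $x$ as in lemma~\ref{lem:simple nbhds} and a point $w\in U$ chronologically preceding $x$, chosen close enough that $\overline{I_U(w,x)}\subseteq U$, together with a timelike path $a\in P^{\mathrm{t}}(U)(w,x)$. By lemma~\ref{lem:diamonds in scnn} we then have $I_X([a])=I_U(w,x)\subseteq U$. Since $e$ acts as a left identity, $[ea]=[e]\,[a]=[a]$, so $I_X([ea])=I_X([a])\subseteq U$. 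On the other hand, $e$ exits $U$, so the timelike path $ea$ passes through some $z=e(t_0)\notin U$; splitting $ea$ at $z$ into timelike pieces $w\to z$ and $z\to x$ exhibits $z\in I_X([ea])$, contradicting $I_X([ea])\subseteq U$. Hence no identity exists.

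Part i) runs the same argument with causal diamonds. Given $[c]=\mathrm{id}_x$ in $\Pi^{\mathrm{c}}(X)$ with representative a causal loop $c$, I take the same $U$ and $w$ and again a \emph{timelike} $a\in P^{\mathrm{t}}(U)(w,x)$ with $\overline{J_U(w,x)}\subseteq U$, so that by the causal variant of lemma~\ref{lem:diamonds in scnn} we have $J_X([a])=J_U(w,x)\subseteq U$ (here $a$ must be timelike for $J_X([a])$ to be controlled). From $[ca]=[c]\,[a]=\mathrm{id}_x\,[a]=[a]$ we get $J_X([ca])=J_X([a])\subseteq U$. If $c$ were nonconstant it would leave $U$ at some $z=c(t_0)\notin U$, and splitting $ca$ at $z$ into causal pieces would give $z\in J_X([ca])$, a contradiction; thus $c$ is constant. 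Part ii) is then a formal consequence: if $[a]\colon x\to y$ has a two-sided inverse $[b]\colon y\to x$, then $[ba]=\mathrm{id}_x$, so by i) the representative $ba$ is constant; since $ba$ is the concatenation of $a$ and $b$, both $a$ and $b$ are constant, whence $x=y$ and $[a]=\mathrm{id}_x$. As $X$ carries nonconstant causal paths between distinct points, $\Pi^{\mathrm{c}}(X)$ has non-invertible morphisms and so is a category but not a groupoid, while by iii) $\Pi^{\mathrm{t}}(X)$ has no identities and is a proper semicategory.

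The step I expect to require the most care is the strong-causality input used for the key observation, namely making precise that ``$U$ strongly causal'' rules out the loop remaining inside $U$ (strong causality $\Rightarrow$ no closed causal curves), together with the verification that $w$ can be chosen so that the relevant diamond closures $\overline{I_U(w,x)}$ and $\overline{J_U(w,x)}$ lie in $U$ — exactly the kind of selection already carried out in the proof of lemma~\ref{lem:Neighborhood basis}. Everything else is bookkeeping around the composition rule $[b][a]=[ba]$ and the diamond definitions.
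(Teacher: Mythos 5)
Your proof is correct, and it takes a noticeably different route from the paper's. The paper sandwiches the putative identity $[e]$ between the two halves of a timelike curve through $x$, obtaining $[c_\varepsilon]=[c\vert_{[t,t+\varepsilon]}][e][c\vert_{[t-\varepsilon,t]}]$ and hence $J_X([e])\subseteq J_X([c_\varepsilon])$ for every $\varepsilon$; letting $\varepsilon\to 0$ and invoking the neighborhood-basis property of lemma~\ref{lem:Neighborhood basis} forces $J_X([e])\subseteq\{x\}$, which pins $e$ to the constant path. You instead fix a single neighborhood $U$ from lemma~\ref{lem:simple nbhds}, compose on one side only to get $I_X([ea])=I_X([a])\subseteq U$ (resp.\ the $J_X$ version), and then use the \emph{strong causality} clause of lemma~\ref{lem:simple nbhds} — no nonconstant closed causal curve can stay inside $U$ — to manufacture an escape point $z\notin U$ that nevertheless lies in the diamond, a contradiction. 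What each approach buys: the paper's limiting argument needs no appeal to the absence of closed causal curves in $U$ and yields the sharper intermediate fact $J_X([e])=\{x\}$ essentially for free (the step ``$J_X([e])\subseteq\{x\}$ implies $e$ constant'' is the same path-splitting observation you make explicit); your version avoids the $\varepsilon\to 0$ limit and the two-sided composition at the cost of carrying out the selection of $w$ with $\overline{I_U(w,x)}\subseteq U$ and $\overline{J_U(w,x)}\subseteq U$, which, as you correctly note, is exactly the construction already performed in the proof of lemma~\ref{lem:Neighborhood basis}. Your deduction of ii) from i) coincides with the paper's.
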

\begin{proof}

Suppose that some path $e$ represents an identity morphism $[e]$ in $\Pi^\mathrm{t}(X)(x,x)$ or $\Pi^\mathrm{c}(X)(x,x)$. Let $c$ be a timelike path such that $c(t)=x$, and choose $\varepsilon$ as in lemma~\ref{lem:Neighborhood basis}. Because $[e]$ is an identity, we have \[ [c_\varepsilon]=\left[c\vert_{[t,t+\varepsilon]} \right] \left[ c\vert_{[t-\varepsilon,t]} \right]=\left[c\vert_{[t,t+\varepsilon]} \right][e] \left[ c\vert_{[t-\varepsilon,t]} \right]\]
which implies $I_X([e])\subseteq I_X([c_\varepsilon])$ or $J_X([e])\subseteq J_X([c_\varepsilon])$, respectively. As, by lemma~\ref{lem:Neighborhood basis}, both the families $I_X([c_\varepsilon])$ and $J_X([c_\varepsilon])$ form neighborhood bases of $x$, we see $I_X([e]) \subseteq \{x\}$ or $J_X([e]) \subseteq \{x\}$, respectively, for $\varepsilon \to 0$. 
This can only be true if $e$ is the constant path from $x$ to $x$, which is indeed a causal but not a timelike path, so we proved i) and iii).

For $a,b\in P^\mathrm{c}(X)$ such that $[a]$ and $[b]$ are inverse to each other, the concatenation $ab$ is constant by i). Therefore, $a$ and $b$ already represent identity morphisms, which implies ii).
\end{proof}

\section{Proof of theorem~\ref{thm:conformal structure}}

We will prove the timelike variant of theorem~\ref{thm:conformal structure}. The proof for the causal variant is completely analogous.

Let $f\colon X \to Y$ be a continuous map such that for any $c\in P^\mathrm{t}(X)$, the push-forward $f_\ast(c) \coloneqq f\circ c$ is in $P^\mathrm{t}(Y)$. From the definitions, it is clear that the push-forward of a timelike homotopy in $X$ is a timelike homotopy in $Y$, hence $f_\ast$ is well defined on timelike homotopy classes and induces a functor $f_\ast\colon \Pi^{\mathrm{t}}(X)\to \Pi^{\mathrm{t}}(Y)$.

ii) $\implies$ i): If ii) is true, $(f^{-1})_{\ast}\colon \Pi^{\mathrm{t}}(Y)\to \Pi^{\mathrm{t}}(X)$ is also a functor. It is inverse to $f_\ast$, so $f_\ast$ is an isomorphism of semicategories.

i) $\implies$ ii): Let $F\colon \Pi^{\mathrm{t}}(X) \to \Pi^{\mathrm{t}}(Y)$ be an isomorphism of semi\-cate\-go\-ries and $f\colon X\to Y$ its map on the object sets. We want to show that $F=f_\ast$.

First, we show that $f$ is a homeomorphism. By definition~\ref{def:diamonds}, for any $z\in I_X([c])$, there exist $[a]\in \Pi^\mathrm{t}(x,z)$, $[b]\in \Pi^\mathrm{t}(z,y)$ with $[b][a]=[c]$.
This implies $F([b])F([a])=F([c])$, hence $f(z)\in I_Y(F([c]))$.
Applying the same argument to $F^{-1}$, one also sees that $z \in I_Y(F([c]))$ implies $f^{-1}(z)\in I_X([c])$, hence $f(I_X([c])) = I_Y(F([c]))$.
Therefore both $f$ and $f^{-1}$ map the bases, given by theorem~\ref{thm:Alexandrov-like}, of the topologies on $X$ and $Y$ to one another, so $f$ is a homeomorphism.

Now we show that for any $c\in P^\mathrm{t}(X)$, the continuous path $f\circ c$ is timelike in $Y$. At first, consider \emph{short enough} paths, such that there is a simply convex normal neighborhood $U \subseteq Y$ with $\overline{I_X([c])} \subseteq f^{-1}(U)$. For any $0\leq t_0 < t_1 \leq 1$, the class $F\left(\left[ c\vert_{[t_0,t_1]} \right]\right)$ is nonempty and consists of classes of timelike paths in
\[ \overline{I_Y\left(F\left(\left[ c\vert_{[t_0,t_1]} \right]\right)\right)} = f\left(\overline{I_X([c\vert_{[t_0,t_1]}])}\right)
\subseteq f\left(\overline{I_X([c])}\right) \subseteq U \]
from $f(c(t_0))$ to $f(c(t_1))$, hence $f(c(t_0))\ll_U f(c(t_1))$. This shows that the path $f \circ c$ is indeed timelike in $U$, hence in $Y$. Because timelike homotopy classes of paths in $U$ are uniquely determined by their start and endpoint (see lemma \ref{lem:simple nbhds}), this also implies $f_*([c])=[f\circ c] = F([c])$.

By compactness, every timelike path is a finite concatenation of short enough paths (cf.\ lemma~\ref{lem:Neighborhood basis}). Therefore the functors $f_\ast$ and $F$ actually coincide for all timelike paths, which finishes the proof.
\qed

\section{Proof of corollary~\ref{cor:conformal structure}}

By theorem~\ref{thm:conformal structure}, and by the assumption that the object map $f$ is a diffeomorphism, $f$ maps future directed differentiable timelike/causal curves in $(X,g)$ to such in $(Y,h)$. Consequently, the differential of $f$ maps the light cones in $TX$ to those in $TY$ and preserves the time orientation. It is well-known (see \cite[Lemma 2.1.\ et seq.]{GlobalLorentzianGeometry}) that in this case, $f$ is a conformal diffeomorphism. \qed

\section{Proof of theorem~\ref{thm:Topology of Pchron}}

In this section we will only consider the space $\Pi^\mathrm{t}(X)$ of timelike homotopy classes, as we rely on the openness of the chronological relation $\ll$ and timelike diamonds.

By definition, a basis of the compact-open topology on $P^{\mathrm{t}}(X)$ is given by all finite intersections of sets of the form
\[ \Omega(K,O) \coloneqq \set[c\in P^t(X)]{c(K)\subseteq O}, \]
where $K\subseteq[0,1]$ is compact and $O\subseteq X$ is open.
Remember that according to equation \eqref{eqn:morphism_sets_1}, $\Pi^{\mathrm{t}}(X)$ carries the corresponding quotient topology.

As a homeomorphism is necessarily open and continuous, we first show the following:

\begin{prop} \label{prop:(s,e) is open}
Let $(X,g)$ be a $\mathcal C^1$ spacetime. Then the start/endpoint-map $(s,e) \colon \Pi^{\mathrm{t}}(X) \to X\times X$ is open (i.e.\ images of open sets are open) and continuous (i.e.\ preimages of open sets are open).
\end{prop}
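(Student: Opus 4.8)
The plan is to prove openness and continuity of $(s,e)$ separately, reducing each to the local description provided by lemma~\ref{lem:simple nbhds} and the basis of subbasic open sets $\Omega(K,O)$ of the compact-open topology. Since $\Pi^{\mathrm{t}}(X)$ carries the quotient topology coming from the projection $\pi\colon P^{\mathrm{t}}(X)\to\Pi^{\mathrm{t}}(X)$, I would work throughout at the level of representatives, checking that the relevant conditions are invariant under timelike homotopy relative to endpoints (which is immediate for the start/endpoint data, since a homotopy rel endpoints fixes $c(0)$ and $c(1)$).

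For \emph{continuity}, it suffices to show that $(s,e)\circ\pi\colon P^{\mathrm{t}}(X)\to X\times X$ is continuous, because a map out of a quotient is continuous iff its composite with the quotient projection is. But $(s,e)\circ\pi$ sends a path $c$ to $(c(0),c(1))$, and this is exactly the evaluation-at-endpoints map. Given a basic open box $O_1\times O_2\subseteq X\times X$, its preimage is $\Omega(\{0\},O_1)\cap\Omega(\{1\},O_2)$, a finite intersection of subbasic compact-open sets, hence open. So continuity is essentially formal and I expect it to take only a line or two.

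For \emph{openness}, the work is to show $(s,e)$ sends open sets to open sets. It is enough to treat a basis of $\Pi^{\mathrm{t}}(X)$; concretely I would take an open set of the form $\pi\bigl(\bigcap_{i}\Omega(K_i,O_i)\bigr)$, fix a class $[c]$ in it with endpoints $(x,y)=(c(0),c(1))$, and produce an open box around $(x,y)$ contained in the image. The natural candidate is built from the timelike diamonds of theorem~\ref{thm:Alexandrov-like}: using lemma~\ref{lem:simple nbhds} choose simply-causal neighborhoods $U_x$ of $x$ and $U_y$ of $y$ small enough that every point $x'\in U_x$ is chronologically connected to $x$ inside $U_x$ and likewise for $y$, and small enough that prepending a short timelike path $x'\leadsto x$ in $U_x$ and appending $y\leadsto y'$ in $U_y$ does not disturb membership in the constraints $\Omega(K_i,O_i)$ — this last point uses that the $K_i$ away from the very endpoints keep $c$ in the open sets $O_i$ with room to spare, so a small perturbation of the end segments stays admissible. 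Then for any $(x',y')$ in a suitable subbox, the concatenated path $\gamma_{yy'}\cdot c\cdot\gamma_{x'x}$ represents a class in the open set mapping to $(x',y')$, exhibiting the box in the image.

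The main obstacle will be the bookkeeping in the openness step: I must choose the neighborhoods $U_x,U_y$ and the perturbing end segments simultaneously small enough to (a) keep the endpoints ranging over a genuine open box, (b) preserve all the finitely many compact-open constraints $\Omega(K_i,O_i)$, and (c) stay timelike via lemma~\ref{lem:simple nbhds} so that the concatenation is again a timelike path whose homotopy class is well-defined and lands in the prescribed open set. The invariance of the whole construction under the choice of representative $c$ of $[c]$ — and hence well-definedness on $\Pi^{\mathrm{t}}(X)$ — is what makes the argument legitimate at the quotient level, and I would verify it by noting that the prepend/append operation is compatible with timelike homotopy rel endpoints. I expect continuity to be trivial and openness to carry essentially all the content of the proposition.
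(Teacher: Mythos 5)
Your continuity argument is exactly the paper's and is fine, as is the reduction to the path space via the quotient map. The gap is in the openness step. You propose to realize nearby endpoint pairs $(x',y')$ by forming $\gamma_{yy'}\cdot c\cdot\gamma_{x'x}$, i.e.\ by \emph{prepending} a future-directed timelike segment from $x'$ to $x$ and appending one from $y$ to $y'$. But a future-directed timelike path from $x'$ to $x$ exists only when $x'\ll x$, so the set of achievable starting points is (a subset of) $I^-(x)$, which is open but does \emph{not} contain $x$ and is not a neighborhood of $x$; no choice of $U_x$ in lemma~\ref{lem:simple nbhds} makes ``every point $x'\in U_x$ is chronologically connected to $x$'' true, since points spacelike to $x$ or in its future cannot be so connected. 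Consequently the ``box'' your construction produces is roughly $I^-(x)\times I^+(y)$: it lies in the image, but it is not a neighborhood of $(x,y)=(s,e)([c])$, so you have not shown that $(x,y)$ is an interior point of the image, which is the whole content of openness.

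The fix is to \emph{replace} initial and final segments of $c$ rather than extend it: connect $x'$ to an interior point $c(t_1)$ with $t_1>0$, using that $x\ll c(t_1)$ and that the chronological relation is open, so $\{x' : x'\ll_{O_1} c(t_1)\}$ \emph{is} an open neighborhood of $x$. This is what the paper does, in a form that also handles the compact-open constraints cleanly: it takes a neighborhood basis of $c$ by sets $\mathcal{U}=\bigcap_{i=1}^{k}\Omega([t_i,t_{i+1}],O_i)$ with $O_i\supseteq c([t_i,t_{i+1}])$ open, observes that a path in $\mathcal{U}$ from $x'$ to $y'$ exists iff there is a chain $x'\ll_{O_1}x_2\ll_{O_2}\cdots\ll_{O_k}y'$, and concludes that $(s,e)(\mathcal{U})$ is open from openness of the relations $\ll_{O_i}$. (A smaller point: the sets $\pi\bigl(\bigcap_i\Omega(K_i,O_i)\bigr)$ need not be open in the quotient topology and need not form a basis of it; the correct reduction is that $(s,e)_{\Pi}(\mathcal{V})=(s,e)_{P}\bigl(\pi^{-1}(\mathcal{V})\bigr)$, so it suffices that the start/endpoint map on $P^{\mathrm{t}}(X)$ be open.)
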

\begin{proof}
Because the start/endpoint map of $\Pi^{\mathrm{t}}(X)$ is induced by the one of the path space $P^{\mathrm{t}}(X)$, we only need to show that the latter is open and continuous.

If $O_1, O_2 \subseteq X$ are open subsets, then the preimage
\[  (s,e)^{-1} (O_1\times O_2) = \Omega(\set0,O_1) \cap \Omega(\set1,O_2) \]
is open in $P^\mathrm{t}(X)$. As the topology on $X\times X$ is generated by products of open sets, this shows the continuity of $(s,e)$.

One can check that for any path $c\in P^\mathrm{t}(X)$, there is a neighborhood basis of $c$ consisting of open sets $\mathcal{U} \coloneqq \bigcap_{i=1}^{k} \Omega\bigl([t_i,t_{i+1}],O_i)$ where $k\in\NN$, $0=t_1<t_2<\dots<t_{k+1}=1$, and $O_i\subseteq X$ is an open neighborhood of $c([t_i,t_{i+1}])$. Any path in $\mathcal{U}$ is a concatenation of timelike paths in $O_1,\dots O_k$. This means that a path from $x$ to $y$ exists in $\mathcal{U}$ if and only if there are points $x_1,\dots,x_k$ such that  $x=x_1\ll_{O_1}x_2\ll_{O_2}\dots\ll_{O_k} x_{k+1} = y$. As the chronological relations $\ll_{O_i}$ are open, we see that $(s,e) (\mathcal{U})$ is open in $X\times X$.
\end{proof}

\begin{cor}
Let $U$ be a neighborhood as in lemma~\ref{lem:simple nbhds} in a $\mathcal C^1$-spacetime. Then the map 
\[ (s,e) \colon \Pi^{\mathrm{t}}(U) \to \set[(x,y)\in U\times U]{x\ll_U y} \]
is a homeomorphism.
\end{cor}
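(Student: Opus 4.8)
The plan is to exhibit $(s,e)$ as a continuous open bijection onto the indicated set, since a continuous bijection is a homeomorphism as soon as it is open. All three ingredients are essentially at hand.

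First I would observe that $U$, as an open submanifold of $X$ carrying the restricted metric and time-orientation, is itself a $\mathcal C^1$-spacetime. Proposition~\ref{prop:(s,e) is open} therefore applies verbatim with $U$ in place of $X$ --- its proof uses only openness of the chronological relation and nothing specific to $X$ --- giving at once that $(s,e)\colon \Pi^{\mathrm{t}}(U)\to U\times U$ is both continuous and open.

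Next I would pin down the image and establish bijectivity. A class $[c]\in \Pi^{\mathrm{t}}(U)$ is represented by a future-directed timelike path in $U$ from $x\coloneqq c(0)$ to $y\coloneqq c(1)$, so $(s,e)([c])=(x,y)$ with $x\ll_U y$; conversely $x\ll_U y$ says precisely that $\Pi^{\mathrm{t}}(U)(x,y)$ is nonempty. Hence the image of $(s,e)$ is exactly $\set[(x,y)\in U\times U]{x\ll_U y}$. Injectivity is where the defining property of $U$ does the essential work, and where restricting to $U$ rather than $X$ is indispensable: if $(s,e)([c_1])=(s,e)([c_2])=(x,y)$, then $[c_1],[c_2]\in\Pi^{\mathrm{t}}(U)(x,y)$, a set with at most one element by lemma~\ref{lem:simple nbhds}, so $[c_1]=[c_2]$.

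Finally I would corestrict the codomain to this image with its subspace topology; an open map into $U\times U$ remains open onto its image, so together with continuity and bijectivity we obtain the homeomorphism. I do not expect a genuine obstacle here: the substance lives in proposition~\ref{prop:(s,e) is open} and in the uniqueness clause of lemma~\ref{lem:simple nbhds}. The only points deserving a word of care are that $U$ inherits the hypotheses of the proposition, and that openness is preserved under corestriction to the image --- both routine.
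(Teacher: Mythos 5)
Your argument is correct and matches the paper's own proof exactly: bijectivity from lemma~\ref{lem:simple nbhds} (together with the definition of $\ll_U$ for surjectivity), and continuity plus openness from proposition~\ref{prop:(s,e) is open} applied to the spacetime $U$, whence the inverse is continuous. The paper states this in one sentence; you have merely spelled out the same steps in more detail.
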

\begin{proof}
Lemma~\ref{lem:simple nbhds} implies that $(s,e)$ is bijective, and proposition~\ref{prop:(s,e) is open} implies that it continuous and open, hence its inverse is also continuous.
\end{proof}
Note that, in the special case of a simply convex normal neighborhood $U$, the proof of lemma~\ref{lem:simple nbhds} already shows that the map
\begin{align*}
\set[(x,y)\in U\times U]{x\ll_U y} &\to \Pi^{\mathrm{t}}(U) \\ \quad (x,y) &\mapsto [\gamma_{xy}]
\end{align*}
is a continuous inverse to $(s,e)$, because the geodesics $\gamma_{xy}$ depend continuously on the points $x,y$. 
The above corollary generalizes this observation to $\mathcal{C}^1$-spacetimes, where the existence of geodesics is not guaranteed.

In order to prove theorem~\ref{thm:Topology of Pchron} and to obtain a purely algebraic description of the topology on $\Pi^\mathrm{t}$, we will show that the sets in the following definition constitute a basis:

\begin{defn} \label{def:U([a],[b],[c])}
For any morphisms $w\overset{[a]}{\longrightarrow}x\overset{[b]}{\longrightarrow}y\overset{[c]}{\longrightarrow}z$ in $\Pi^{\mathrm{t}}(X)$, let $\mathcal{U}([a],[b],[c]) \subseteq \Pi^{\mathrm{t}}(X)$ be the set of morphisms $[d]$ for which there are additional morphisms $[a_1],[a_2],[c_1],[c_2]$ (dashed arrows) that make the following diagram commute:

\[
\begin{tikzcd}[row sep=small]
z \\
 & q \arrow[swap,dashed, near end]{ul}{[c_2]} \\
y \arrow{uu}{[c]}
	\arrow[swap,dashed, near start]{ur}{[c_1]} \\
\null \\
x \arrow{uu}{[b]} \\
& p \arrow[swap]{uuuu}{[d] \, \in\,  \mathcal{U}([a],[b],[c])}
	\arrow[swap,dashed, near end]{ul}{[a_2]} \\
w \arrow{uu}{[a]}
	\arrow[swap,dashed, near start]{ur}{[a_1]}
\end{tikzcd}
\]
\end{defn}

\begin{lem}
\label{lem:U([a],[b],[c]) open}
The sets $\mathcal{U}([a],[b],[c])$ are open in $\Pi^{\mathrm{t}}(X)$.
\end{lem}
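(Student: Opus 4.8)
The plan is to pass to the quotient map $\pi\colon P^{\mathrm t}(X)\to\Pi^{\mathrm t}(X)$: by definition of the quotient topology, $\mathcal U([a],[b],[c])$ is open if and only if $\pi^{-1}\bigl(\mathcal U([a],[b],[c])\bigr)$ is open in the compact-open topology on $P^{\mathrm t}(X)$. So I fix a path $d\in\pi^{-1}\bigl(\mathcal U([a],[b],[c])\bigr)$ with endpoints $p=d(0)$, $q=d(1)$, together with witnessing paths $a_1,a_2,c_1,c_2$ satisfying $a_2a_1\sim_{\mathrm t}a$, $c_2c_1\sim_{\mathrm t}c$ and $d\sim_{\mathrm t}c_1ba_2$, and I construct a basic open neighborhood of $d$ contained in the preimage. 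As preparation I pick simple neighborhoods $U\ni p$ and $V\ni q$ as in lemma~\ref{lem:simple nbhds}, and anchor points $p^-=a_1(1-\eta)$, $p^+=a_2(\eta)$, $q^-=c_1(1-\eta)$, $q^+=c_2(\eta)$ for $\eta$ so small that $p^-,p,p^+\in U$, $q^-,q,q^+\in V$ and $p^-\ll_U p\ll_U p^+$, $q^-\ll_V q\ll_V q^+$.

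Next I would fix the neighborhood $\mathcal N$ of $d$. Using the description of basic neighborhoods from the proof of proposition~\ref{prop:(s,e) is open}, I cover $d([0,1])$ by finitely many simple neighborhoods and take $\mathcal N=\bigcap_i\Omega([t_{i-1},t_i],W_i)$, where $d([t_{i-1},t_i])\subseteq W_i$, refined so that each consecutive pair lies in a common simple neighborhood, with $W_1\subseteq U$ and $W_N\subseteq V$; I intersect this with $\Omega(\{0\},I_U(p^-,p^+))$ and $\Omega(\{1\},I_V(q^-,q^+))$. For any $d'\in\mathcal N$ with endpoints $p'=d'(0)\in I_U(p^-,p^+)$, $q'=d'(1)\in I_V(q^-,q^+)$, I define witnesses by rerouting through the fixed anchors: $a_1'$ is the unique $U$-timelike class $p^-\to p'$ prepended to $a_1|_{[0,1-\eta]}$, and $a_2'$ is $a_2|_{[\eta,1]}$ postcomposed with the unique $U$-timelike class $p'\to p^+$, and symmetrically $c_1',c_2'$ in $V$. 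The relations $a_2'a_1'\sim_{\mathrm t}a$ and $c_2'c_1'\sim_{\mathrm t}c$ then follow at once, because the classes $p^-\to p'\to p^+$ and $p^-\to p\to p^+$ coincide in $\Pi^{\mathrm t}(U)$ (all diagrams in $\Pi^{\mathrm t}(U)$ commute, corollary~\ref{corr:simple nbhds}), and likewise in $V$.

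The heart of the matter is the last relation, $[d']=[c_1'][b][a_2']$. Using uniqueness of timelike classes in $U$ and $V$ one rewrites $c_1'ba_2'$, up to timelike homotopy, as $d'|_{[1-\eta_\ast,1]}\cdot(\rho\,\beta\,\lambda)\cdot d'|_{[0,\eta_\ast]}$, where $\beta=c_1|_{[0,1-\eta]}\,b\,a_2|_{[\eta,1]}$ runs from $p^+$ to $q^-$ and $\lambda,\rho$ are the canonical $U$- and $V$-connectors at the chopped interior points $m'=d'(\eta_\ast)$, $n'=d'(1-\eta_\ast)$; since $d'=d'|_{[1-\eta_\ast,1]}\cdot\mu'\cdot d'|_{[0,\eta_\ast]}$ with $\mu'=d'|_{[\eta_\ast,1-\eta_\ast]}$, and concatenation respects timelike homotopy, it suffices to prove $\mu'\sim_{\mathrm t}\rho\,\beta\,\lambda$ relative to $m',n'$. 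Here lies the real obstacle: the timelike semicategory has no inverses, so I cannot obtain this middle relation from the given homotopy $d\sim_{\mathrm t}c_1ba_2$ by cancelling the outer segments, and the nodes $d(t_i)$ and $d'(t_i)$ need not be causally related, so the homotopy cannot be transported segment by segment in the obvious way.

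I expect to resolve this by a \emph{transfer} construction that replaces cancellation. Along the cover chosen above I build an explicit chain of intermediate paths $d=e_0\sim_{\mathrm t}e_1\sim_{\mathrm t}\dots\sim_{\mathrm t}e_N=d'$, where $e_j$ agrees with $d'$ before $t_j$ and with $d$ after, joined by a timelike connector in the relevant simple neighborhood; the crucial point is that, for $d'\in\mathcal N$, openness of $\ll$ guarantees $d'(t_{j-1})\ll d(t_{j+1})$, so each single transition happens inside one simple neighborhood (a double segment), where uniqueness of classes supplies the required homotopy by a leapfrog past the node. This yields a timelike homotopy from $d$ to $d'$ whose start- and end-point tracks stay in $U$ and $V$ respectively; applying the same construction to $c_1ba_2\rightsquigarrow c_1'ba_2'$ with matching endpoint tracks (which are determined up to homotopy since $U,V$ are simply connected) and splicing with $d\sim_{\mathrm t}c_1ba_2$ gives the desired $d'\sim_{\mathrm t}c_1'ba_2'$. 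Thus $d'\in\pi^{-1}\bigl(\mathcal U([a],[b],[c])\bigr)$, and since $d'\in\mathcal N$ was arbitrary the preimage is open, proving the lemma.
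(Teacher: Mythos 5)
Your overall strategy matches the paper's: reduce to openness of $\pi^{-1}\bigl(\mathcal U([a],[b],[c])\bigr)$ in the compact-open topology, surround $d$ by a basic neighborhood built from a finite chain of neighborhoods as in lemma~\ref{lem:simple nbhds}, and manufacture the witnesses for an arbitrary nearby $d'$ by a leapfrog between the nodes of $d$ and $d'$, each elementary step being certified by uniqueness of timelike classes in a single simple neighborhood (corollary~\ref{corr:simple nbhds}). You also correctly isolate the central danger, namely that $\Pi^{\mathrm t}(X)$ admits no cancellation.

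The gap is that the step where this danger must actually be defused --- passing from the given relation $[d]=[c_1][b][a_2]$ to the required relation $[d']=[c_1'][b][a_2']$ --- is asserted rather than proved, and the mechanism you name would not survive scrutiny. ``Splicing'' the rel-endpoint homotopy $d\sim_{\mathrm t}c_1ba_2$ with two transfers having matching endpoint tracks is, in the topological category, exactly the argument that reverses the tracks and cancels them; in $\Pi^{\mathrm t}(X)$ the reversed tracks are not timelike and, by corollary~\ref{cor:no inverses} and the paper's example (morphisms are neither epi nor mono), no cancellation of outer factors is available --- which is the same objection you yourself raise against chopping off the segments $d'\vert_{[0,\eta_\ast]}$ and $d'\vert_{[1-\eta_\ast,1]}$. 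Note also that your leapfrog needs the two-sided sandwich $d(t_{j-1})\ll d'(t_j)\ll d(t_{j+1})$ at every node, not only the one relation you state; this is why the paper's neighborhood $\mathcal U_d$ imposes $d'(t_i)\in I_X\bigl([\tilde d\vert_{[t_{i-1},t_{i+1}]}]\bigr)$ rather than a mere containment in open sets. The paper's device for the endpoint problem is to prepend $a_1$ and append $c_2$, forming $\tilde d=c_2\cdot d\cdot a_1$ with \emph{fixed} endpoints $w$ and $z$, and to run the ladder between $\tilde d$ and $d'$; the three identities $[a_2'][a_1']=[a]$, $[c_2'][c_1']=[c]$ and $[c_1'][b][a_2']=[d']$ are then read off from one pasted diagram whose cells commute either by the definition of $I_X([\,\cdot\,])$ or by corollary~\ref{corr:simple nbhds}, with every substitution performed by composing on the outside only. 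Your construction can be reorganised into that diagram, but as written the decisive last step is missing.
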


\begin{proof}
Let $\pi\colon P^\mathrm{t}(X) \to \Pi^\mathrm{t}(X), c \mapsto [c]$ be the quotient map. We will show that $\pi^{-1}\bigl( \mathcal{U}([a],[b],[c]) \bigr)$ is open in $P^\mathrm{t}(X)$ by constructing an open neighborhood $\mathcal{U}_d\subseteq \pi^{-1}\bigl( \mathcal{U}([a],[b],[c]) \bigr)$ of any path $d\in \pi^{-1}\bigl( \mathcal{U}([a],[b],[c]) \bigr)$.

Starting with a diagram as in definition~\ref{def:U([a],[b],[c])}, we concatenate the paths $a_1$, $d$ and $c_2$ to a path $\tilde d\colon  [-1,2]\to X$ from $w$ to $z$ such that $\tilde d\vert_{[0,1]} = d$.

Now we choose $k\in \NN$, some neighborhoods $U_{1},\dots,U_{k-1}$ as in lemma~\ref{lem:simple nbhds}, and
\[
-1 < t_{-1}<t_0=0<t_1<\dots <t_k=1<t_{k+1}<2
\]
such that 
\begin{enumerate}[i)]
\item For every $i \in \set{1,\dots,k-1}$, the sets $\overline{I_X([\tilde d\vert_{[t_{i-2},t_{i}]}])}$
and $\overline{I_X([\tilde d\vert_{[t_{i},t_{i+2}]}])}$ lie inside $U_i$.
\item
There are paths $a_1',a_2',c_1',c_2'$ such that the diagrams in figures~\ref{fig:diagram ii 1}~and~\ref{fig:diagram ii 2} commute.
\end{enumerate}

We can achieve i) since, by compactness, the image of $\tilde d$ is covered by a finite number of such neighborhoods $U_i$, and the differences between $t_i$ and $t_{i\pm 2}$ can be made sufficiently small by choosing $k$ large enough (cf.\ the proof of lemma~\ref{lem:Neighborhood basis}).
Condition ii) can be achieved by choosing $t_{\pm 1}$ and $t_{k\pm 1}$ sufficiently close to $0$ or $1$, respectively.  

\begin{figure}[tbp]
\begin{minipage}{0.6\linewidth}
	\begin{subfigure}{0.49\linewidth}
		\[
		\begin{tikzcd}[row sep=small,column sep=normal]
		x \\
		& \tilde d(t_{1}) \arrow[dashed,swap]{ul}{[a_2']} \\
		& \tilde d(t_{0}) \arrow{u} \arrow[dashed,bend left=10,near start]{uul}{[a_2]} \\
		& \tilde d(t_{-1}) \arrow{u} \\
		w \arrow{uuuu}{[a]} \arrow[dashed,swap]{ur}{[a_1']} \arrow[dashed,bend left=10,near end]{uur}{[a_1]}
		\end{tikzcd}
		\]
		\caption{}\label{fig:diagram ii 1}
	\end{subfigure}
	\hfill
	\begin{subfigure}{0.49\linewidth}
		\[
		\begin{tikzcd}[row sep=small,column sep=normal]
		z \\
		& \tilde d(t_{k+1}) \arrow[dashed,swap]{ul}{[c_2']}  \\
		& \tilde d(t_{k}) \arrow{u} \arrow[dashed,bend left=10,near start]{uul}{[c_2]} \\
		& \tilde d(t_{k-1}) \arrow{u}	\\
		y \arrow{uuuu}{[c]} \arrow[dashed,swap]{ur}{[c_1']} \arrow[dashed,bend left=10,near end]{uur}{[c_1]}
		\end{tikzcd}
		\]
		\vspace{-3ex}
		\caption{}\label{fig:diagram ii 2}
	\end{subfigure} \\[5ex]
	\begin{subfigure}{0.49\linewidth}
		\[
		\begin{tikzcd}[row sep=small,column sep=small]
		 \tilde d(t_{i+1}) \\
		 \tilde d(t_{i}) \arrow{u} & d'(t_{i}) \arrow[dashdotted]{ul} \\
		 \tilde d(t_{i-1}) \arrow{u}\arrow[dashdotted]{ur} \\
		\end{tikzcd}
		\]
		\vspace{-3ex}
		\caption{}\label{fig:diagram through d'}
	\end{subfigure}
	\hfill
	\begin{subfigure}{0.49\linewidth}
		\[
		\begin{tikzcd}[row sep=small,column sep=small]
		& d'(t_{i+1}) \\
		\tilde d(t_{1}) \arrow[dashdotted]{ur} & d'(t_{i}) \arrow{u} \\
		& d'(t_{i-1}) \arrow{u}\arrow[dashdotted]{ul} \\
		\end{tikzcd}
		\]
		\vspace{-3ex}
		\caption{}\label{fig:diagram through d}
	\end{subfigure}
\end{minipage}
\hfill
\begin{minipage}{0.35\linewidth}
	\begin{subfigure}{\linewidth}
		\[
		\begin{tikzcd}[row sep=small,column sep=tiny]
		z \\
		& \tilde d(t_{k+1}) \arrow[dashed]{ul}  \\
		& \tilde d(t_{k}) \arrow{u} & d'(t_{k}) \arrow[dashdotted]{ul} \\
		& \tilde d(t_{k-1}) \arrow{u}\arrow[dashdotted]{ur} & d'(t_{k-1}) \arrow{u}\arrow[dashdotted]{ul}  	\\
		y \arrow{uuuu}{[c]} \arrow[dashed]{ur}  & ~~\vdots~~  \arrow{u}\arrow[dashdotted]{ur} & ~~\vdots~~  \arrow{u}\arrow[dashdotted]{ul} \\
		 & ~~\vdots~~ & ~~\vdots~~ \\
		x \arrow{uu}{[b]} & \tilde d(t_{2}) \arrow{u}\arrow[dashdotted]{ur} & d'(t_{2}) \arrow{u}\arrow[dashdotted]{ul}\\
		& \tilde d(t_{1}) \arrow{u}\arrow[dashdotted]{ur}\arrow[dashed]{ul} & d'(t_{1}) \arrow{u}\arrow[dashdotted]{ul} \\
		& \tilde d(t_{0}) \arrow{u}\arrow[dashdotted]{ur} & d'(t_{0}) \arrow{u}\arrow[dashdotted]{ul} \\
		& \tilde d(t_{-1}) \arrow{u}\arrow[dashdotted]{ur} 	\\
		w \arrow{uuuu}{[a]}
			\arrow[dashed]{ur}
		\end{tikzcd}
		\]
		\vspace{-3ex}
		\caption{}\label{fig:diagram large}
	\end{subfigure}
\end{minipage}
\caption{Commuting diagrams in $\Pi^{\mathrm{t}}(X)$ for the proof of lemma~\ref{lem:U([a],[b],[c]) open}.
Unlabeled vertical arrows are timelike homotopy classes of paths $\tilde d\vert_{[t_i,t_{i+1}]}$ or $d'\vert_{[t_i,t_{i+1}]}$, respectively.
}
\end{figure}
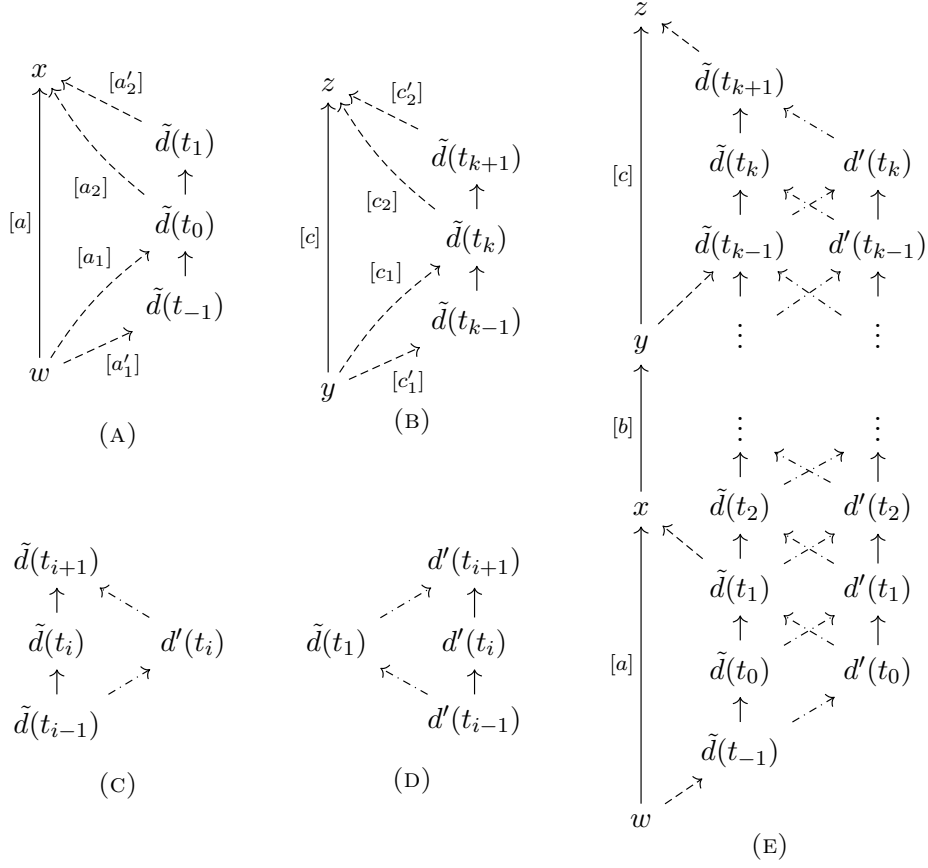

Now define the set
\begin{align*}
\mathcal{U}_d \coloneqq \Big\lbrace\,  d' \in P^\mathrm{t}(X) \, \big \vert \, 
\forall i\in\set{0,\dots,k} &: d'(\{t_i\}) \subseteq  I_X \left( [\tilde d\vert_{[t_{i-1},t_{i+1}]}] \right), \\
\forall i\in\set{1,\dots,k-1} &: d' \left( [t_{i-1},t_{i+1}] \right) \subseteq U_i
\, \Big\rbrace .
\end{align*}

By construction, $\mathcal{U}_d$ is open in the compact-open topology and contains the path $d$.
Now let $d' \in \mathcal{U}_d$. We need to show that $[d'] \in \mathcal{U}([a],[b],[c])$:
Since $d'(t_i) \in I_X \left( [\tilde d\vert_{[t_{i-1},t_{i+1}]}] \right)$ for every $i\in \set{0,\dots,k}$, there are morphisms (dash-dotted arrows) that make the diagram in figure~\ref{fig:diagram through d'} commute.

These morphisms are homotopy classes of paths in $U_{i\pm 1}$,by condition i). Therefore, all morphisms in the diagram in figure~\ref{fig:diagram through d} are homotopy classes of paths in $U_i$. Corollary~\ref{corr:simple nbhds} then implies that this diagram also commutes.

Taking everything together, we see that the diagram in figure~\ref{fig:diagram large} commutes, hence $[d']\in \mathcal{U}([a],[b],[c])$.
\end{proof}
\goodbreak

\begin{lem} 
\label{lem:(s,e) is local homeo}
Let $[a],[b],[c]$ as in definition~\ref{def:U([a],[b],[c])}. If $\overline{I_X([a])}\subseteq U_a$ and $\overline{I_X([c])} \subseteq U_c$ for some neighborhoods $U_a, U_c \subseteq X$ as in lemma~\ref{lem:simple nbhds}, the start/endpoint map maps $\mathcal{U}([a],[b],[c])$ homeomorphically onto $I_X([a]) \times I_X([c])$.
\end{lem}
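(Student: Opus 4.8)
The plan is to show that the restriction of $(s,e)$ to $\mathcal{U}([a],[b],[c])$ is a continuous open bijection onto $I_X([a])\times I_X([c])$; since a continuous open bijection is a homeomorphism, this suffices. Continuity and openness come essentially for free: the set $\mathcal{U}([a],[b],[c])$ is open by lemma~\ref{lem:U([a],[b],[c]) open}, and $(s,e)$ is continuous and open on all of $\Pi^{\mathrm{t}}(X)$ by proposition~\ref{prop:(s,e) is open}. Hence for any open $V\subseteq\mathcal{U}([a],[b],[c])$, the set $V$ is open in $\Pi^{\mathrm{t}}(X)$, so $(s,e)(V)$ is open in $X\times X$ and therefore open in the subspace $I_X([a])\times I_X([c])$ (which is itself open by theorem~\ref{thm:Alexandrov-like}). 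So the real content is to establish bijectivity.

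First I would unwind the definition of $\mathcal{U}([a],[b],[c])$: writing $p=s([d])$ and $q=e([d])$, a class $[d]$ lies in the set precisely when there are timelike classes $[a_1]\colon w\to p$, $[a_2]\colon p\to x$, $[c_1]\colon y\to q$, $[c_2]\colon q\to z$ with $[a_2][a_1]=[a]$, $[c_2][c_1]=[c]$ and $[d]=[c_1][b][a_2]$. In particular $p\in I_X([a])$ and $q\in I_X([c])$, so $(s,e)$ does map $\mathcal{U}([a],[b],[c])$ into $I_X([a])\times I_X([c])$. Surjectivity is then immediate: given $(p,q)$ in the target, the defining factorizations of $[a]$ through $p$ and of $[c]$ through $q$ exist, and $[d]\coloneqq[c_1][b][a_2]$ is a preimage lying in $\mathcal{U}([a],[b],[c])$.

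The crux — and the step I expect to be the main obstacle — is injectivity, which I would reduce to the \emph{uniqueness} of the factorization $[a]=[a_2][a_1]$ through a fixed $p\in I_X([a])$ (and symmetrically for $[c]$ through $q$). The key observation making this go through is that any timelike path $a_1$ from $w$ to $p$ realizing such a factorization lies \emph{entirely} inside $U_a$: its interior points lie in $I_X([a_1])$, one checks directly from the definitions that $I_X([a_1])\subseteq I_X([a])$, and its endpoints $w,p$ lie in $\overline{I_X([a_1])}\subseteq\overline{I_X([a])}\subseteq U_a$; hence $a_1\in P^{\mathrm{t}}(U_a)(w,p)$, and likewise $a_2\in P^{\mathrm{t}}(U_a)(p,x)$ (the same argument also shows $a$ itself lies in $U_a$). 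By lemma~\ref{lem:simple nbhds} the sets $\Pi^{\mathrm{t}}(U_a)(w,p)$ and $\Pi^{\mathrm{t}}(U_a)(p,x)$ each contain at most one class, and corollary~\ref{corr:simple nbhds} lets me transport this uniqueness back into $\Pi^{\mathrm{t}}(X)$; so $[a_1]$ and $[a_2]$ are uniquely determined by $p$, and symmetrically $[c_1],[c_2]$ by $q$. Any two classes $[d],[d']\in\mathcal{U}([a],[b],[c])$ with the same start- and endpoints therefore share the same factorization data, whence $[d]=[c_1][b][a_2]=[d']$. With bijectivity established, the homeomorphism claim follows from the continuity and openness noted above.
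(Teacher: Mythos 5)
Your proof is correct and takes essentially the same route as the paper's: reduce to bijectivity using the openness and continuity of $(s,e)$ from proposition~\ref{prop:(s,e) is open}, read off surjectivity from definition~\ref{def:diamonds}, and get injectivity from the uniqueness of timelike homotopy classes in $U_a$ and $U_c$ via lemma~\ref{lem:simple nbhds}. The only difference is that you spell out (via $I_X([a_1])\subseteq I_X([a])$) why the factor paths lie in $\overline{I_X([a])}\subseteq U_a$, a step the paper merely asserts.
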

\begin{proof}
By proposition~\ref{prop:(s,e) is open}, $(s,e)$ is open and continuous, and its restriction to the open set $\mathcal{U}([a],[b],[c])$ inherits these properties. We only need to show that $(s,e)$ maps $\mathcal{U}([a],[b],[c])$ bijectively onto $I_X([a]) \times I_X([c])$.

By definition~\ref{def:diamonds}, the points $p\in I_X([a])$, $q\in I_X([c])$ are exactly the points for which there are timelike homotopy classes $[a_1]\in \Pi^{\mathrm{t}}(X)(w,p)$, $[a_2]\in \Pi^{\mathrm{t}}(X)(p,x)$,$[c_1]\in \Pi^{\mathrm{t}}(X)(y,q)$ and $[c_2]\in \Pi^{\mathrm{t}}(X)(q,z)$ with $[a_2][a_1]=[a]$ and $[c_2][c_1]=[c]$. As the commuting diagram in definition~\ref{def:U([a],[b],[c])} is then completed by setting $[d] \coloneqq [c_1][b][a_2]$, we see that $(p,q)\in (s,e) \bigl( \mathcal{U}([a],[b],[c]) \bigr)$, hence $(s,e)$ maps $\mathcal{U}([a],[b],[c])$ surjectively onto $I_X([a]) \times I_X([c])$.

On the other hand, in a diagram as in definition~\ref{def:U([a],[b],[c])}, the representative paths $a_1$ and $a_2$ lie completely in $\overline{I([a])}\subseteq U_a$, hence their timelike homotopy classes are uniquely determined by the point $p$ (see lemma~\ref{lem:simple nbhds}). Analogously, the classes $[c_1]$ and $[c_2]$ are uniquely determined by $q$, hence the homotopy class $[d] = [c_1][b][a_2]$ is the unique preimage of $(p,q)$ in $\mathcal{U}([a],[b],[c])$. This shows that $(s,e)\vert_{\mathcal{U}([a],[b],[c])}$ is injective.
\end{proof}

Lemma~\ref{lem:(s,e) is local homeo} implies part i) of theorem~\ref{thm:Topology of Pchron}, which in turn implies the rest of theorem~\ref{thm:Topology of Pchron}. Since the sets $I_X([a])\times I_X([b])$ form a basis of $X\times X$, we also see that the sets $\mathcal{U}([a],[b],[c])$ form a basis of the topology of $\Pi^{\mathrm{t}}(X)$.

\section{An example}

\begin{example} \label{ex:Non-Hausdorff}
Let $X\coloneqq (\RR \times S, -\dd t^2 + g)$ be a Lorentzian product manifold, where $(S,g)$ is a closed Riemannian manifold, and fix $p,q\in S$ and $T\in \RR$.
We will use Morse theory (see e.g.\ \cite[chapter 10]{GlobalLorentzianGeometry} for a summary of Lorentzian Morse theory) to examine the homotopy type of the space of timelike or causal paths $P^{\mathrm{t/c}}(X)\bigl((0,p),(T,q) \bigr)$, as the connected components of this space are exactly the timelike or causal homotopy classes of paths.

Since $X$ is a product manifold, any path $c\in P^{\mathrm{t/c}}(X)\bigl((0,p),(T,q) \bigr)$ can be uniquely reparametrized to the form
\[ c \colon [0,1] \to \RR \times S, \quad t \mapsto \bigl(t\, T,\tilde c(t) \bigr), \]
where $\tilde c$ is a continuous path in $S$. On the other hand, given a path $\tilde c$ in $S$, the path $c$ is timelike if and only if $\tilde c$ satisfies 
\[ d_g(\tilde c(t_1),\tilde c(t_2)) < T\cdot(t_2-t_1) \text{ for all } 0\leq t_1<t_2\leq 1, \]
and causal if and only if it satisfies 
\[ d_g(\tilde c(t_1),\tilde c(t_2)) \leq T\cdot(t_2-t_1) \text{ for all } 0\leq t_1<t_2\leq 1, \]
i.e. if $\tilde c$ is $T$-Lipschitz.
Additionally, $c$ is a geodesic in $X$ if and only if $\tilde c$ is a geodesic in $S$. If this is the case, it is not hard to see that $\tilde c$ and $c$ have the same geodesic index.

By reparametrizing each path $\tilde c$ in $S$ proportionally to its arc length, we get a homotopy equivalence between $P^{\mathrm{t/c}}(X)\bigl((0,p),(T,q) \bigr)$ and the space of paths in $S$ of length $<T$ or $\leq T$, respectively. We can now either use (Riemannian) Morse theory on the path space of $S$ or Lorentzian morse theory on the path space of $X$ to arrive at the following conclusion:

\emph{
$P^{\mathrm{t/c}}(X)\bigl((0,p),(T,q) \bigr)$ is homotopy equivalent to a finite CW-complex with a cell of dimension $\lambda$ for each geodesic of index $\lambda$ and length $<T$ or $\leq T$, respectively, in $S$ from $p$ to $q$.
}

\begin{figure}[tb]
\includegraphics[width=0.5\linewidth]{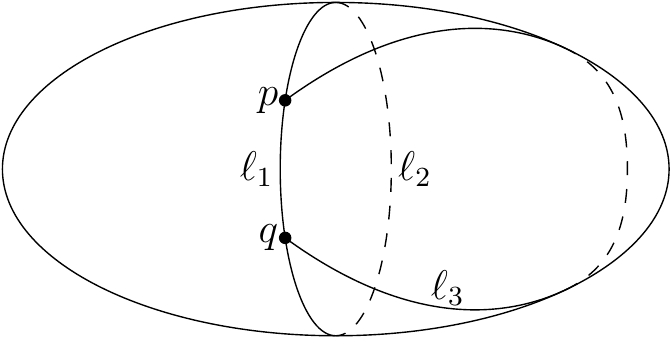}
\caption{The ellipsoid $(S,g)$ in example~\ref{ex:Non-Hausdorff} with geodesics of length $\ell_1=\frac12 \pi,\ell_2=\frac32 \pi,\ell_3\approx 1.6 \pi$.}
\label{fig:ellipsoid}
\end{figure}

As a concrete example, let $S=\set[(x,y,z)\in\RR^3]{x^2+y^2+(z/2)^2=1}$ be an elongated rotational ellipsoid with the standard metric induced from $\RR^3$.
The points $p=(1,0,0), q=(0,1,0)\in S$ decompose the equator into two geodesics $c_1$ and $c_2$ of length $\ell_1 = \frac12 \pi$ and $\ell_2=\frac32 \pi$, respectively, as seen in figure~\ref{fig:ellipsoid}. In fact, $c_1$ is the unique shortest geodesic from $p$ to $q$, and $c_2$ is the unique second-shortest geodesic. One can show that $c_1$ and $c_2$ are the only geodesics of index $0$ from $p$ to $q$. Intuitively, index $0$ means that $c_2$ is shortest among curves from $p$ to $q$ which are \enquote{sufficiently close} to $c_2$.

The aforementioned CW-complex is empty for $T=0$. As we raise $T$, we have to add a $0$-cell (i.e.\ a point) at $T=\ell_1$, another $0$-cell at $T=\ell_2$, and then eventually glue some $1$-cells (i.e.\ intervals; there will be no higher dimensional cells, as $S$ is two-dimensional) to one or both of the $0$-cells. Attaching cells of nonzero dimension never raises the number of connected components, but at some $T=\ell_3>\ell_2$, we will attach a 1-cell that connects both $0$-cells. This is because $S$ is simply connected, so its path space is connected.
The first connecting cell corresponds to a geodesic of length $\ell_3$, as shown in figure~\ref{fig:ellipsoid}.

As mentioned before, the timelike/causal homotopy classes of paths in $X$ are exactly the connected components of $P^{\mathrm{t/c}}(X)\bigl((0,p),(T,q) \bigr)$, and the number of connected components is the same as in the $CW$-complex. Therefore we get the following numbers:

{\allowdisplaybreaks
\begin{align*}
\bigl\vert \Pi^{\mathrm{t}}(X)\bigl((0,p),(T,q) \bigr) \bigr\vert
&=\begin{cases}
0 & \text{for } \phantom{\ell_0 <{}}  T \leq \ell_1 \\
1 & \text{for } \ell_1 < T \leq \ell_2 \\
2 & \text{for } \ell_2 < T \leq \ell_3 \\
1 & \text{for } \ell_3 < T 
\end{cases}
\\
\bigl\vert \Pi^{\mathrm{c}}(X)\bigl((0,p),(T,q) \bigr) \bigr\vert
&=\begin{cases}
0 & \text{for } \phantom{\ell_0 <{}} T < \ell_1 \\
1 & \text{for } \ell_1 \leq T < \ell_2 \\
2 & \text{for } \ell_2 \leq T < \ell_3 \\
1 & \text{for } \ell_3 \leq T 
\end{cases}
\\
\bigl\vert \Pi(X)\bigl((0,p),(T,q) \bigr) \bigr\vert
&=1
\end{align*}
}%
The last equation reflects the fact that $S\times \RR$ is simply connected.
Note that $\bigl\vert \Pi^{\mathrm{t}}(X)\bigl((0,p),(T,q) \bigr) \bigr\vert$ and $\bigl\vert \Pi^{\mathrm{c}}(X)\bigl((0,p),(T,q) \bigr) \bigr\vert$ differ for $T = \ell_1, \ell_2, \ell_3$.

In this example, several interesting observations can be made:%
\begin{itemize}
\item
$\Pi^{\mathrm{t}}(X)$ is not Hausdorff. Let $U_1$ and $U_2$ be open neighborhoods of the two timelike homotopy classes in $\Pi^{\mathrm{t}}(X)\bigl((0,p),(\ell_3,q) \bigr)$. By theorem~\ref{thm:Topology of Pchron}, the images of these sets under the start/endpoint map are open neighborhoods of $\bigl((0,p),(\ell_3,q) \bigr)\in X\times X$. Both of these contain $\bigl((0,p),(\ell_3+\varepsilon,q)\bigr)$ for some small $\varepsilon>0$. But since the latter element has only one preimage in $\Pi^{\mathrm{t}}(X)$, the neighborhoods $U_1$ and $U_2$ cannot be disjoint.
\item
The start/endpoint-map of $\Pi^\mathrm{c}(X)$ is not a local homeomorphism. If it were one, the preimage of the one-dimensional submanifold $\RR\cong \left\{ \bigl((0,p),(T,q)\bigr) \,\middle|\, T\in \RR \,\right\}\subset X\times X$ would be locally homeomorphic to $\RR$. One can check that this is not the case; the preimage is actually homeomorphic to three intervals which are glued together at the unique morphism in $\Pi^{\mathrm{c}}(X)\bigl((0,p),(\ell_3,q) \bigr)$.
\item
The functors $\Pi^{\mathrm{t}}(X) \to \Pi^{\mathrm{c}}(X)$ and $\Pi^{\mathrm{c}}(X) \to \Pi(X)$ induced by the inclusions $P^\mathrm{t}(X) \subseteq P^\mathrm{c}(X) \subseteq P(X)$ are not faithful (i.e.\ not injective on the morphisms). This can be seen by counting the homotopy classes for $T=\ell_3$ or $T=\ell_2$, respectively.

In other words: There are timelike paths which are causally homotopic, but not timelike homotopic to each other.

\item In general, morphisms in $\Pi^{\mathrm{t/c}}(X)$ are neither epimorphisms nor monomorphisms. For example, take $\ell_2<T<\ell_3<\tilde T$ and morphisms 
\begin{align*}
[a], [a'] &\in \Pi^{\mathrm{t/c}}(X)\bigl((0,p),(T,q) \bigr) \quad\text{with } [a]\neq [a'], \\
[b] &\in \Pi^{\mathrm{t/c}}(X)\bigl((T,p),(\tilde T,q) \bigr).
\end{align*}
Then, we have $[b][a]=[b][a']$ despite $[a]\neq [a']$, because there is only one homotopy class in $\Pi^{\mathrm{t/c}}(X)\bigl((0,p),(\tilde T,q) \bigr)$. This is in contrast to the fundamental groupoid $\Pi(X)$, in which every morphism is an isomorphism.
\end{itemize}

\end{example}

\section*{Acknowledgements}
The author acknowledges funding by the Deutsche Forschungsgemeinschaft (DFG, German Research Foundation) – 281869850 (RTG 2229), thanks Wilderich Tuschmann and Olaf Müller for interesting discussions and many helpful comments on drafts of this paper, and Marius Graeber for the idea behind example~\ref{ex:Non-Hausdorff}.
The definition of the category $\Pi^{\mathrm{c}}$ is inspired by, but not equivalent to the definition of the fundamental category of a directed space in \cite{grandis_2009}. 

\printbibliography

\section*{Appendix}

\begin{proof}[Proof of lemma~\ref{lem:Topology of fundamental groupoid}]
A topological space with the given properties has a universal covering $p \colon \tilde X \to X$. We first look at the commuting diagram
\[
\begin{tikzcd}
\Pi(\tilde X) \arrow[swap]{d}{(s,e)_{\Pi(\tilde X)}} \arrow{r}{p_\ast} & \Pi(X) \arrow{d}{(s,e)_{\Pi(X)}}
& {[\tilde c]} \arrow[mapsto]{d}\arrow[mapsto]{r} & {[p\circ \tilde c]} \arrow[mapsto]{d} \\
\tilde X\times \tilde X \arrow{r}{(p,p)} & X \times X
& \bigl(\tilde c(0),\tilde c(1) \bigr) \arrow[mapsto]{r} & \bigl(p(\tilde c(0)),p(\tilde c(1)) \bigr) .
\end{tikzcd}
\]
It is not hard to see that all of these maps are open and continuous.
Since $\tilde X$ is simply connected, the homotopy class (relative to start and endpoints) of a path in $\tilde X$ is uniquely given by its start and endpoint. This shows that $(s,e)_{\Pi(\tilde X)}$ is a homeomorphism, and $\Pi(\tilde X)$ is therefore simply connected. 

As the map $(p,p)$ is a universal covering, and $(s,e)_{\Pi(\tilde X)}$ is a homeomorphism, the map $(p,p)\circ(s,e)_{\Pi(\tilde X)} = (s,e)_{\Pi(X)}\circ p_\ast$ is also a universal covering.

We will now show that $p_\ast$ itself is also a covering: Let $[c]\in \Pi(X)(x,y)$ and let $U\subset X$ be an open neighborhood of $x$ over which $p$ is trivial. 
 This means that the preimage $p^{-1}(U)$ decomposes into disjoint open sets $U_i$, such that $p\vert_{U_i}\colon U_i \to U$ is a homeomorphism for every $i$. 

The set $\mathcal{V}\coloneqq \set[{[c']\in \Pi(X)}]{c'(0)\in U}$ is an open neighborhood of $[c]$ in $\Pi(X)$. Its preimage $p_\ast^{-1}(\mathcal{V})$ decomposes into disjoint open sets  $\mathcal{V}_i \coloneqq \set[{[\tilde c]\in \Pi(\tilde X)}]{\tilde c (0)\in U_i}$. By the homotopy lifting property of the universal covering, there is exactly one preimage (lifting) of every path class $[c']\in \mathcal{V}$ in every $\mathcal{V}_i$. Therefore, $p_\ast$ maps all $\mathcal{V}_i$ homeomorphically to $\mathcal{V}$, hence it is a covering.

We have just shown that both $(s,e)_{\Pi(X)} \circ p_\ast$ and $p_\ast$ are universal coverings, which implies that $(s,e)_{\Pi(X)}$ is also a covering.
The deck transformation group of $(s,e)_{\Pi(X)} \circ p_\ast$ is isomorphic to $\pi_1(X)\times \pi_1(X)$, where the first factor acts on the starting points and the second factor acts on the endpoints of path classes. The deck transformation group of $p_\ast$ is isomorphic to $\pi_1(X)$ and it acts on starting points and endpoints simultaneously, i.e.\ it acts as the diagonal subgroup $\Delta \pi_1(X) \subseteq \pi_1(X)\times \pi_1(X)$. Therefore, the diagram
\[
\begin{tikzcd}
\Pi(\tilde X) \arrow{d}{(s,e)_{\Pi(\tilde X)}} \arrow{r}{p_\ast} & \Pi(X) \arrow{d}\arrow{r}{(s,e)_{\Pi(X)}} & X \times X \arrow{d} \\
\tilde X\times \tilde X \arrow{r} & \quotient{(\tilde X\times \tilde X)}{\Delta\pi_1(X)} \arrow{r} & \quotient{(\tilde X\times \tilde X )}{\bigl( \pi_1(X)\times \pi_1(X) \bigr)}
\end{tikzcd}
\]
commutes, the horizontal arrows are coverings and the vertical arrows are homeomorphisms.

From this diagram, we can work out the homeomorphisms
\[ e^{-1}(x_0) = \bigl((s,e)_{\Pi(X)}\bigr)^{-1} \bigl(X\times \{\,x_0\,\} \bigr) \cong \quotient{\bigl(\tilde X \times p^{-1}(x_0) \bigr)}{\Delta \pi_1(X)} \cong \tilde X \] for any point $x_0\in X$, which implies ii). 

The statements iii) and iv) are a direct consequence of the fact that $(s,e)_{\Pi(X)}$ is a covering.

\end{proof}

\end{document}